\newtheorem{theorem}{Theorem}[section]
\newtheorem{lemma}[theorem]{Lemma}
\newtheorem{corollary}[theorem]{Corollary}
\theoremstyle{definition}
\newtheorem{definition}[theorem]{Definition}
\newtheorem{example}[theorem]{Example}
\newtheorem{question}[theorem]{Question}
\theoremstyle{remark}
\newtheorem{remark}[theorem]{Remark}
\begin{document}

\setcounter{page}{1}

\title{New topologies derived from the old one via ideals
}

\author{F.Y. Issaka$^{\rm 1}$, M. Özkoç$^{\rm 2,\ast}$}

\address{$^{1}$Department of Mathematics, Graduate School of Natural and Applied Sciences, Muğla Sıtkı Koçman University, 48000, Menteşe-Muğla, Turkey.}
\email{faicalyacine@gmail.com}

\address{$^{2}$Department of Mathematics, Faculty of Science, Muğla Sıtkı Koçman University, 48000, Menteşe-Muğla, Turkey.}
\email{murad.ozkoc@mu.edu.tr \& murad.ozkoc@gmail.com}


\subjclass[2010] {54A05, 54A10, 54C60}

\keywords{maximal ideal, minimal ideal, ideal quotient, annihilator, $\sharp$-operator, $\sharp$-topology}

\date{$^{*}$Corresponding author}

\begin{abstract}
The main purpose of this paper is to introduce and study minimal and maximal ideals defined on ideal topological spaces. Also, we define and investigate the concepts of ideal quotient and annihilator of any subfamily of $2^X$, where $2^X$ is the power set of $X.$ We obtain some of their fundamental properties. In addition, several relationships among the above notions have been discussed. Moreover, we get a new topology, called sharp topology via the sharp operator defined in the scope of this study, finer than the old one. Furthermore, a decomposition of the notion of open set has been obtained. Finally, we conclude our work with some interesting applications. 
\end{abstract} 
\maketitle

\section{Introduction}
Some classical structures such as filters \cite{filtre}, ideals \cite{c}, grills\cite{grill}, and also primals \cite{primal} are some of the topics hard studied in the area of general topology. An ideal $\mathcal{I}$ on a topological space $(X,\tau)$ is a non-empty collection of subsets of $X$ which satisfies (i) $A\in \mathcal{I}$ and $B \subseteq A$ implies $B\in \mathcal{I}$ and (ii) $A\in \mathcal{I}$ and $B\in \mathcal{I}$ implies $A\cup B\in \mathcal{I}$. A topological space with an ideal is called ideal topological space. The concept of the local function in general topology was introduced by Kuratovski \cite{kur} in 1933 and studied from very different aspects by many mathematicians. In an ideal topological space $\left(X,\tau,\mathcal{I}\right)$, the local function $(\cdot)^{*}$ \cite{kur} is defined as $A^{*}\left(\mathcal{I},\tau\right)=\{ x\in X|\left(\forall U\in \tau\left(x\right)\right)\left(U\cap A\notin\mathcal{I}\right)\},$ where $\tau\left(x\right)$ is the collection of all open subsets containing $x\in X$.  Especially,  Vaidyanathaswamy \cite{vai} investigated more detailed properties of the local function in 1945. Thanks to the concept of the local function, the literature gained a new topology called $*$-topology and was studied further by Hayashi \cite{h} in 1964 and Njastad \cite{nja} in 1966, later by Samuel \cite{g} in 1975 and many others.
In 1990, after a hiatus of about 15 years, this topic was revisited by Jankovic and Hamlett \cite{jan}. In that article, they have not only summarised all the known facts on this topic, but also presented some new results.

Subsequently, many papers have been published on this topic. For instance, Arenas et al. \cite{are} studied the idealization of some weak separation axioms, while Navaneethakrishnan \cite{nav} in 2008 devoted their attention to investigating $g$-closed sets in ideal topological spaces. The others such as Hatır \cite{hat} and Ekici \cite{eki} have studied the decompositions of continuity in ideal topological spaces and  $I$-Alexandroff topological spaces in ideal topological spaces, respectively.

Most of these approaches to the subject are not very different with each other. In one approach, a new topology with new properties is obtained by changing the definition of the local function, while in another approach, topologies arising from more or less known different ideals are considered together.


In section 3, we define the notions of the maximal and minimal ideals. Some characterizations of these concepts are obtained.  

In section 4, we introduce and study the notion of ideal quotient and investigate some of its fundamental properties. We also give a characterization for the notion of the maximal ideal through ideal quotient. 

In section 5, we define the concepts of the annihilator of a set family and faithful ideal. We obtain a characterization the concept of the minimal ideal with the help of the concept of annihilator. We also give a new characterization of denseness via annihilator.

In section 6, we introduce a new operator called sharp operator and obtain some of its fundamental properties. Also, we create a new Kuratowski closure operator through the sharp operator. The topology obtained via this Kuratowski closure operator come across finer than the original one. Moreover, we reveal a decomposition of an open set.

In section 7, we introduce the concepts of $*$-continuity and $\sharp$-continuity. We give a relation  between continuity and $\sharp$-continuity. Furthermore, we obtain a new decomposition of continuity.

In the last section, we give some applications of sharp operator and prove the denseness of the set of all rational numbers using the notions of sharp topology and annihilator defined in the scope of this paper. We are also looking for answers to the following questions:

\begin{itemize}
\item Is there any Hausdorff space on $\mathbb{R}$ such that the set of all irrational numbers $\mathbb{I}$ is not dense, while the set of all rational numbers $\mathbb{Q}$ is dense?

\item Is there any Hausorff space on $\mathbb{R}$ such that the set of all rational numbers $\mathbb{Q}$ is clopen?

\item Is there a disconnected Hausdorff topological space on $\mathbb{R}$ except discrete topological space?
\end{itemize}
\section{Preliminaries}
Throughout this paper, $(X,\tau)$ and $(Y,\sigma)$ (or simply $X$ and $Y$) always mean topological spaces on which no separation axioms are assumed unless otherwise stated. For a subset $A$ of a space $X$, the closure and the interior of $A$ will be denoted by $cl(A)$ and $int(A),$ respectively. 
The operator $cl^{*}:2^X\to 2^X$ defined by $cl^*(A)=A\cup A^*$ is a Kuratowski closure operator. The topology induced by the operator $cl^*$ is $\tau^{*}(\mathcal{I},\tau)=\{A\subseteq X|cl^*(X\setminus A)=X\setminus A\}$ and called $*$-topology which is finer than $\tau.$ Natkaniec \cite{d} have introduced the complement of local function called $\Psi$-operator which is defined by $\Psi(A)=X\setminus(X\setminus A)^*$ for any subset $A$ of $X.$ An ideal topological space $\left(X,\tau,\mathcal{I} \right)$ is called a Hayashi-Samuel space \cite{hays} if $\tau\cap \mathcal{I}=\{\emptyset\}$. 

\begin{lemma}\label{gulkız}
Let $(X,\tau,\mathcal{I})$ be an ideal topological space and $A\subseteq X.$ If $A\in \mathcal{I},$ then $A^*=\emptyset.$
\end{lemma}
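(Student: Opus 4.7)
The plan is to argue directly from the definition of the local function together with the two axioms defining an ideal. Recall that, by definition,
\[
A^{*}(\mathcal{I},\tau)=\{x\in X\mid (\forall U\in\tau(x))(U\cap A\notin\mathcal{I})\},
\]
so to show $A^{*}=\emptyset$ it suffices to produce, for an arbitrary $x\in X$, some $U\in\tau(x)$ for which $U\cap A\in\mathcal{I}$; in fact, I expect every open neighbourhood $U$ of $x$ to witness this.

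The key observation is the heredity axiom of an ideal: if $A\in\mathcal{I}$ and $B\subseteq A$, then $B\in\mathcal{I}$. Since for any $U\subseteq X$ we have $U\cap A\subseteq A$, the hypothesis $A\in\mathcal{I}$ immediately forces $U\cap A\in\mathcal{I}$ for every $U\in\tau$. In particular, this holds for every open neighbourhood $U$ of any given point $x\in X$, so the condition $(\forall U\in\tau(x))(U\cap A\notin\mathcal{I})$ defining membership in $A^{*}$ fails at every $x$.

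I would write it as a short proof by contradiction: assume there exists $x\in A^{*}$; then in particular $X\in\tau(x)$, so $X\cap A=A\notin\mathcal{I}$, contradicting $A\in\mathcal{I}$. Alternatively, the direct contrapositive version outlined above is equally clean. There is no real obstacle here, since the argument is a one-line unfolding of definitions plus the heredity of $\mathcal{I}$; the only thing to be careful about is making explicit that the heredity axiom (condition (i) in the definition of an ideal recalled in the introduction) is what we are using, not the finite-union axiom.
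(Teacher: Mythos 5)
Your argument is correct: since $U\cap A\subseteq A\in\mathcal{I}$, heredity gives $U\cap A\in\mathcal{I}$ for every open $U$, so no point can satisfy the defining condition of $A^{*}$ (and your contradiction variant via $U=X$ works just as well). The paper states this lemma without proof, as it is a standard fact about the local function, and your one-line unfolding of the definitions is exactly the expected argument.
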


\begin{lemma} \cite{ser}\label{8}
Let $\left(X,\tau \right ) $ be a topological space and $\mathcal{I},\mathcal{J}\subseteq 2^X$ be ideals on $X$. Then, the following hold for any subset $A\subseteq X.$
\\

a) $A^*(\mathcal{I}\cap\mathcal{J},\tau ) = A^*(\mathcal{I},\tau )\cup A^*(\mathcal{J},\tau ), $
\\

b) $\tau^*(\mathcal{I}\cap\mathcal{J},\tau ) = \tau^*(\mathcal{I},\tau )\cap \tau^*(\mathcal{J},\tau ). $ 
\end{lemma}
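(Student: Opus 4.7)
The plan is to establish part (a) directly by double inclusion and then deduce part (b) as a straightforward corollary via the correspondence between $*$-topologies and the operator $cl^*(B) = B \cup B^*$. The two basic tools I will lean on are the heredity and additivity of an ideal, together with the antitonicity of the local operator in its ideal argument: if $\mathcal{K} \subseteq \mathcal{L}$, then $A^*(\mathcal{L}, \tau) \subseteq A^*(\mathcal{K}, \tau)$, a fact that is immediate from the definition since a witness $U \in \tau(x)$ with $U \cap A \in \mathcal{K}$ automatically satisfies $U \cap A \in \mathcal{L}$.

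For the inclusion $A^*(\mathcal{I},\tau) \cup A^*(\mathcal{J},\tau) \subseteq A^*(\mathcal{I}\cap\mathcal{J},\tau)$ in (a), I would simply apply antitonicity to $\mathcal{I}\cap\mathcal{J} \subseteq \mathcal{I}$ and $\mathcal{I}\cap\mathcal{J} \subseteq \mathcal{J}$ and take the union. For the reverse inclusion I would argue contrapositively: assume $x \notin A^*(\mathcal{I},\tau) \cup A^*(\mathcal{J},\tau)$, pick $U_1, U_2 \in \tau(x)$ with $U_1 \cap A \in \mathcal{I}$ and $U_2 \cap A \in \mathcal{J}$, and consider $U := U_1 \cap U_2 \in \tau(x)$. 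Since $U \cap A$ sits inside both $U_1 \cap A$ and $U_2 \cap A$, heredity of each ideal forces $U \cap A \in \mathcal{I} \cap \mathcal{J}$, witnessing $x \notin A^*(\mathcal{I}\cap\mathcal{J},\tau)$.

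For part (b) I would exploit the definitional identity $B \in \tau^*(\mathcal{K},\tau) \iff (X \setminus B)^*(\mathcal{K},\tau) \subseteq X \setminus B$. Setting $A := X \setminus B$ and invoking part (a), the condition $(X \setminus B)^*(\mathcal{I}\cap\mathcal{J},\tau) \subseteq X \setminus B$ becomes $(X \setminus B)^*(\mathcal{I},\tau) \cup (X \setminus B)^*(\mathcal{J},\tau) \subseteq X \setminus B$, which is equivalent to each of the two summands individually being contained in $X \setminus B$. Translating back through the identity above gives $B \in \tau^*(\mathcal{I},\tau) \cap \tau^*(\mathcal{J},\tau)$, completing the equality.

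I do not expect any genuine obstacle; the one spot that calls for slight care is the contrapositive step in (a), where one must use heredity to pass from $U_i \cap A$ to the smaller set $U \cap A$ inside both ideals simultaneously. Everything else is bookkeeping through the Kuratowski closure $cl^*$ and the antitonicity observation noted at the outset.
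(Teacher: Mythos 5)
Your argument is correct. Note that the paper itself gives no proof of this lemma --- it is imported verbatim from Modak--Islam \cite{ser} --- so there is nothing internal to compare against; your double-inclusion proof of (a) (antitonicity of $(\cdot)^*$ in the ideal argument for one direction, and intersecting the two witnessing neighbourhoods plus heredity for the other) and the reduction of (b) to (a) via the characterization $B\in\tau^*(\mathcal{K},\tau)\iff (X\setminus B)^*(\mathcal{K},\tau)\subseteq X\setminus B$ is exactly the standard argument and is sound. The only cosmetic remark is that additivity of the ideals, which you list among your tools, is never actually used; heredity and the closure of $\tau$ under finite intersections suffice.
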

\begin{definition}
Let $X$ be a non-empty set and $A\subseteq X.$ Then, the ideal generated by $A$ is defined as $\mathcal{I}(A):=\{I | I\subseteq A\}.$
\end{definition}

\begin{lemma}
Let $X$ be a non-empty  set and $A\subseteq X.$ Then, the family $\mathcal{I}_{\epsilon}(A)=\{I\subseteq X | I\cap A=\emptyset\}$ is an  ideal on $X.$
\end{lemma}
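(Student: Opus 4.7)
The plan is to verify directly the three defining properties of an ideal for the family $\mathcal{I}_\epsilon(A)=\{I\subseteq X\mid I\cap A=\emptyset\}$, namely non-emptiness, heredity under subsets, and closure under finite unions, exactly as recorded in the definition given in the introduction.

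First I would observe that $\mathcal{I}_\epsilon(A)$ is non-empty: since $\emptyset\cap A=\emptyset$, the empty set belongs to $\mathcal{I}_\epsilon(A)$. Next, for the heredity clause, I would take arbitrary $I\in \mathcal{I}_\epsilon(A)$ and $B\subseteq I$; then $B\cap A\subseteq I\cap A=\emptyset$, which forces $B\cap A=\emptyset$, so $B\in \mathcal{I}_\epsilon(A)$.

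For the finite additivity clause, I would pick any $I,J\in \mathcal{I}_\epsilon(A)$ and compute
\[
(I\cup J)\cap A=(I\cap A)\cup (J\cap A)=\emptyset\cup\emptyset=\emptyset,
\]
using distributivity of intersection over union, hence $I\cup J\in \mathcal{I}_\epsilon(A)$.

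There is no real obstacle here: the argument is a one-line check of each ideal axiom, and the only mildly delicate point is making sure to state non-emptiness explicitly (via $\emptyset\in \mathcal{I}_\epsilon(A)$) so that the collection qualifies as an ideal in the sense recalled in the Introduction.
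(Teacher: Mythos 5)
Your proof is correct: the paper states this lemma without proof, and your direct verification of the three ideal axioms (non-emptiness via $\emptyset\in\mathcal{I}_\epsilon(A)$, heredity via $B\cap A\subseteq I\cap A$, and finite additivity via distributivity of $\cap$ over $\cup$) is exactly the routine check the authors evidently intended the reader to supply.
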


\section{Maximal and minimal ideal on topological space}

\begin{definition}
Let $\mathcal{I}$ be a proper ideal on $X$ i.e. $\mathcal{I}\neq 2^X.$ Then, $\mathcal{I}$ is said to be a maximal ideal if for any ideal $\mathcal{J}$ with $\mathcal{I}\subseteq \mathcal{J}$, $\mathcal{I}= \mathcal{J}$ or $ \mathcal{J} = 2^X$.
\end{definition}
\begin{theorem}\label{1}
Let $\mathcal{I}$ be a proper ideal on $X.$ Then, $\mathcal{I}$ is a maximal ideal if and only if for all $A,B\subseteq X,$ if $A\cap B \in \mathcal{I}$ then $A\in \mathcal{I}$ or $B\in \mathcal{I}$.
\end{theorem}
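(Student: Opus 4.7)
The plan is to prove the two implications separately; the condition being characterized is exactly the ideal-theoretic analogue of the primeness condition for ring ideals, transplanted into the lattice $(2^X,\cup,\cap)$, so the arguments will closely mirror the classical ones.

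For the easier direction ($\Leftarrow$), I would assume $\mathcal{I}$ has the stated property and take an arbitrary ideal $\mathcal{J}$ on $X$ with $\mathcal{I}\subsetneq \mathcal{J}$. Picking any $A\in \mathcal{J}\setminus \mathcal{I}$, the trivial identity $A\cap(X\setminus A)=\emptyset\in \mathcal{I}$, together with $A\notin \mathcal{I}$, forces $X\setminus A\in \mathcal{I}\subseteq \mathcal{J}$ by the hypothesis. Since both $A$ and $X\setminus A$ lie in $\mathcal{J}$, their union $X$ lies in $\mathcal{J}$, and heredity gives $\mathcal{J}=2^X$.

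For the forward direction ($\Rightarrow$), I would use the standard "generated ideal" trick. Assume $\mathcal{I}$ is maximal, that $A\cap B\in \mathcal{I}$, and that $A\notin \mathcal{I}$; the goal is to conclude $B\in \mathcal{I}$. I would introduce
\[
\mathcal{I}':=\{S\subseteq X \mid (\exists I\in \mathcal{I})(S\subseteq I\cup A)\}
\]
and first verify that $\mathcal{I}'$ is an ideal on $X$: heredity is immediate, while closure under finite unions follows from combining the witnesses via $I_1\cup I_2\in \mathcal{I}$. Since $A\subseteq \emptyset \cup A$ shows $A\in \mathcal{I}'$ while $A\notin \mathcal{I}$, the inclusion $\mathcal{I}\subseteq \mathcal{I}'$ is strict. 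Maximality of $\mathcal{I}$ then forces $\mathcal{I}'=2^X$, so in particular $X\in \mathcal{I}'$, i.e., $X\subseteq I\cup A$ for some $I\in \mathcal{I}$. This yields $X\setminus A\subseteq I$, hence $X\setminus A\in \mathcal{I}$ by heredity. Finally, the decomposition $B=(A\cap B)\cup (B\setminus A)$ places $B$ into $\mathcal{I}$, since the first piece lies in $\mathcal{I}$ by assumption and $B\setminus A\subseteq X\setminus A\in \mathcal{I}$.

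The only delicate step is the construction of $\mathcal{I}'$ and the verification that it is a proper extension of $\mathcal{I}$ (so that maximality is actually applicable); once this is in place, both directions reduce to bookkeeping with the ideal axioms and the elementary set identity $B=(A\cap B)\cup(B\setminus A)$.
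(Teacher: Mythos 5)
Your proof is correct. The backward direction is essentially the paper's: both hinge on the identity $A\cap(X\setminus A)=\emptyset\in\mathcal{I}$ to force $X\setminus A\in\mathcal{I}\subseteq\mathcal{J}$, hence $X\in\mathcal{J}$ and $\mathcal{J}=2^X$. The forward direction, however, takes a genuinely different route. The paper introduces the family $\mathcal{J}=\{A\cap B \mid A\in\mathcal{I}\ \vee\ B\in\mathcal{I}\}$, invokes maximality to get $\mathcal{I}=\mathcal{J}$, and then reads the conclusion off from $A\cap B\in\mathcal{J}$; as written, membership in that $\mathcal{J}$ only asserts the existence of \emph{some} representation $C\cap D$ with $C\in\mathcal{I}$ or $D\in\mathcal{I}$ (and by heredity $\mathcal{J}$ collapses to $\mathcal{I}$ anyway), so the final inference about the given $A$ and $B$ is not justified there. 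You instead pass through the ideal $\mathcal{I}'$ generated by $\mathcal{I}\cup\{A\}$: since $A\notin\mathcal{I}$ the containment $\mathcal{I}\subseteq\mathcal{I}'$ is proper, maximality forces $\mathcal{I}'=2^X$, and $X\subseteq I\cup A$ yields $X\setminus A\in\mathcal{I}$, after which the decomposition $B=(A\cap B)\cup(B\setminus A)$ closes the argument. This is the standard prime-versus-maximal argument transplanted from ring theory; it is fully rigorous, and as a by-product it isolates the intermediate fact (the paper's Theorem 3.3) that maximality forces $A\in\mathcal{I}$ or $X\setminus A\in\mathcal{I}$ for every $A\subseteq X$. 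The only extra cost is the verification that $\mathcal{I}'$ is an ideal properly extending $\mathcal{I}$, which you carry out.
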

\begin{proof}
$(\Rightarrow):$ Let $\mathcal{I}$ be a maximal ideal and $A\cap B\in \mathcal{I}.$
\\
\\
$\left. \begin{array}{rr} 
\mathcal{J}:=\{A\cap B | A\in \mathcal{I} \vee B\in \mathcal{I}\} \Rightarrow (\mathcal{J} \text{ is an ideal})(\mathcal{I}\subseteq \mathcal{J}) \\ \mathcal{I} \text{ is maximal ideal} \end{array}	\right\}\Rightarrow$
\\
\\
$\left. \begin{array}{rr} 
\Rightarrow \mathcal{I}=\mathcal{J} \\ A\cap B \in \mathcal{I}\end{array}	\right\} \Rightarrow A\cap B \in\mathcal{J}\Rightarrow A\in \mathcal{I} \vee B\in \mathcal{I}.$
\\

$(\Leftarrow):$ Let $\mathcal{J}$ be an ideal such that $\mathcal{J}\neq 2^X$ and $\mathcal{I}\subseteq\mathcal{J}.$ We will prove that $\mathcal{I}=\mathcal{J}.$ Let $A\in \mathcal{J}.$
\\
$\left. \begin{array}{rr} 
  \mathcal{I}\text{ is an ideal}\Rightarrow \emptyset \in \mathcal{I} \\ \emptyset= A\cap (X\setminus A) \end{array}	\right\}\Rightarrow \!\!\!\!\! \begin{array}{c} \\
	\left. \begin{array}{r} 
	A\cap (X\setminus A) \in \mathcal{I} \\ \text{Hypothesis}  \end{array}	\right\} \Rightarrow  A\in \mathcal{I} \vee (X\setminus A) \in \mathcal{I} 
\end{array}$  
\\
$\left. \begin{array}{rr} 
  \Rightarrow A\in \mathcal{I} \vee (X\setminus A) \in \mathcal{I} \\ \mathcal{I}\subseteq\mathcal{J} \end{array}	\right\}\Rightarrow \!\!\!\!\! \begin{array}{c} \\
	\left. \begin{array}{r} 
 A\in \mathcal{I} \vee (X\setminus A) \in \mathcal{J}  \\  A\in \mathcal{J}  \end{array}	\right\} \Rightarrow  
\end{array}$
\\ 
\\
$\left. \begin{array}{rr} 
\Rightarrow X=A\cup (X\setminus A) \in \mathcal{J} \vee A\in \mathcal{I} \\ \mathcal{J}\neq 2^X \end{array}\right\} \Rightarrow A\in \mathcal{I}$
\\

Thus, we have $\mathcal{J}\subseteq \mathcal{I}.$ Since $\mathcal{I}\subseteq  \mathcal{J},$ we  get $\mathcal{I}= \mathcal{J}.$ 
\end{proof}

\begin{theorem} \label{maxkar}
Let $\mathcal{I}$ be a proper ideal on $X.$ Then, $\mathcal{I}$ is a maximal ideal if and only if  $A\in \mathcal{I}$ or $X\setminus A \in \mathcal{I}$ for all $A\subseteq X.$
\end{theorem}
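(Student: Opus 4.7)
The plan is to leverage Theorem \ref{1} for one direction and argue directly from the definition of maximal ideal for the other, avoiding any need to redo Zorn-style constructions.

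For the forward direction, suppose $\mathcal{I}$ is maximal. Given an arbitrary $A \subseteq X$, I would observe that $A \cap (X \setminus A) = \emptyset \in \mathcal{I}$ (since every ideal contains the empty set). Then Theorem \ref{1} applied with $B := X \setminus A$ yields immediately that $A \in \mathcal{I}$ or $X \setminus A \in \mathcal{I}$. This half is essentially a one-line corollary.

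For the converse, assume the dichotomy: for every $A \subseteq X$, either $A \in \mathcal{I}$ or $X \setminus A \in \mathcal{I}$. To show maximality, I would take any ideal $\mathcal{J}$ with $\mathcal{I} \subseteq \mathcal{J}$ and $\mathcal{I} \neq \mathcal{J}$, and argue $\mathcal{J} = 2^X$. Pick $A \in \mathcal{J} \setminus \mathcal{I}$; by hypothesis $X \setminus A \in \mathcal{I} \subseteq \mathcal{J}$, so $X = A \cup (X \setminus A) \in \mathcal{J}$ by the closure of $\mathcal{J}$ under finite unions, whence $\mathcal{J} = 2^X$.

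I do not anticipate any real obstacle: the only subtle point is remembering that the definition of maximality in the paper allows two alternatives ($\mathcal{I} = \mathcal{J}$ or $\mathcal{J} = 2^X$), so strictly speaking one should handle the case $\mathcal{I} = \mathcal{J}$ trivially and then focus on a strict inclusion. Also, it is worth noting that the hypothesis forces $\mathcal{I}$ to be proper consistently with the assumption, since if both $A$ and $X \setminus A$ had to belong to $\mathcal{I}$ for some $A$ the argument would collapse; but properness is already assumed, so no extra verification is needed. The whole proof should fit in a few lines.
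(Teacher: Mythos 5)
Your proposal is correct and follows essentially the same route as the paper: the forward direction is the same one-line application of Theorem \ref{1} to $A\cap(X\setminus A)=\emptyset\in\mathcal{I}$, and your converse is the paper's argument in contrapositive form (you pick $A\in\mathcal{J}\setminus\mathcal{I}$ and conclude $X=A\cup(X\setminus A)\in\mathcal{J}$, hence $\mathcal{J}=2^X$, whereas the paper assumes $\mathcal{J}\neq 2^X$ and derives $\mathcal{J}\subseteq\mathcal{I}$). No gaps.
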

\begin{proof}
 $(\Rightarrow):$ Let $\mathcal{I}$ be a maximal ideal and $A\subseteq X.$
\\
$\left. \begin{array}{rr} 
  \mathcal{I}\text{ is an ideal}\Rightarrow \emptyset \in \mathcal{I} \\ A\subseteq X\Rightarrow \emptyset=A\cap (X\setminus A) \end{array}	\right\}\Rightarrow \!\!\!\!\!\!\!\!\!\! \begin{array}{c} \\
	\left. \begin{array}{r} 
	A\cap (X\setminus A) \in \mathcal{I} \\  \mathcal{I}\text{ is maximal ideal}  \end{array}	\right\} \overset{\text{Theorem }\ref{1}}{\Rightarrow}  A\in \mathcal{I} \vee (X\setminus A) \in \mathcal{I}.
\end{array}$
\\

$(\Leftarrow):$ Let $\mathcal{I}$ and $\mathcal{J}$ be two ideals such that $\mathcal{J}\neq 2^X$ and $\mathcal{I}\subseteq \mathcal{J}.$ We will prove that $\mathcal{I}=\mathcal{J}.$ Now, let $A\in \mathcal{J}.$
\\
$\left. \begin{array}{rr} 
  A\in\mathcal{J}\Rightarrow A\subseteq X \\ \text{Hypothesis} \end{array}	\right\}\Rightarrow \!\!\!\!\! \begin{array}{c} \\
	\left. \begin{array}{r} 
 A\in \mathcal{I} \vee (X\setminus A) \in \mathcal{I}  \\  \mathcal{I}\subseteq \mathcal{J}  \end{array}	\right\} \Rightarrow  
\end{array}$ 
\\
$\left. \begin{array}{rr} 
  \Rightarrow  A\in \mathcal{I} \vee (X\setminus A) \in \mathcal{J}  \\  A\in \mathcal{J} \end{array}	\right\}\Rightarrow \!\!\!\!\! \begin{array}{c} \\
	\left. \begin{array}{r} 
X= A\cup (X\setminus A)\in \mathcal{J} \vee A \in \mathcal{I}  \\  \mathcal{J}\neq 2^X  \end{array}	\right\} \Rightarrow A\in \mathcal{I}
\end{array}$
\\

Thus, we have $\mathcal{J}\subseteq \mathcal{I}.$ Since  $\mathcal{I}\subseteq\mathcal{J},$  we  get $\mathcal{I}= \mathcal{J}.$ 
\end{proof}

\begin{corollary}
 Let $(X,\tau,\mathcal{I})$ be an ideal topological space. If $\mathcal{I}$ is a maximal ideal, then  $A^*=\emptyset$ or $(X\setminus A)^*=\emptyset$ for all $A\subseteq X.$
\end{corollary}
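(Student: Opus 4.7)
The plan is to combine Theorem \ref{maxkar} with Lemma \ref{gulkız} in a single direct chain; no clever construction is needed. Fix an arbitrary $A\subseteq X$ and look at the dichotomy provided by maximality on the pair $\{A, X\setminus A\}$.

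First I would invoke Theorem \ref{maxkar}: since $\mathcal{I}$ is a maximal ideal, either $A\in \mathcal{I}$ or $X\setminus A\in \mathcal{I}$. This is really the only structural input, and it is what the corollary is essentially advertising as a topological consequence of maximality.

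Next I would feed each branch of the disjunction into Lemma \ref{gulkız}, which says that any member of $\mathcal{I}$ has empty local function. If $A\in \mathcal{I}$, Lemma \ref{gulkız} gives $A^{\ast}=\emptyset$; if $X\setminus A\in \mathcal{I}$, the same lemma applied to $X\setminus A$ gives $(X\setminus A)^{\ast}=\emptyset$. Disjoining the two conclusions yields exactly the required statement $A^{\ast}=\emptyset \vee (X\setminus A)^{\ast}=\emptyset$.

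I do not expect any real obstacle here: the corollary is a two-line consequence, and the only thing to be careful about is that maximality is used exactly through Theorem \ref{maxkar} (the characterization via complements) rather than through the original definition, which would force one to build an auxiliary ideal. Using Theorem \ref{maxkar} keeps the argument to a clean case split plus one application of Lemma \ref{gulkız} in each case.
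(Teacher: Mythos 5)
Your proof is correct and is exactly the argument the paper intends: the corollary is stated without proof immediately after Theorem \ref{maxkar}, and the very chain you describe ($A\in\mathcal{I}\vee X\setminus A\in\mathcal{I}$, then Lemma \ref{gulkız} on each branch) appears verbatim as the first two steps of the paper's proof of Theorem \ref{22}. Nothing is missing.
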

\begin{theorem}\label{22}
Let $(X,\tau,\mathcal{I})$ be an ideal topological space and $A\subseteq X.$ If $\mathcal{I}$ is a maximal ideal, then $A$ is $\tau^*$-closed or $\tau^*$-open.
\end{theorem}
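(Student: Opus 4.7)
The plan is to combine the maximal-ideal characterization from Theorem \ref{maxkar} with Lemma \ref{gulkız} about the vanishing of the local function on ideal members, and then unwind the definitions of $cl^*$ and $\tau^*$. The statement is really a two-line corollary once the right inputs are assembled, so I expect essentially no obstacle; the task is just to organize the case analysis cleanly.

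\textbf{Step 1 (split into two cases).} Fix $A\subseteq X$. Since $\mathcal{I}$ is a maximal ideal, Theorem \ref{maxkar} gives us the dichotomy $A\in \mathcal{I}$ or $X\setminus A\in \mathcal{I}$. I would handle these two cases separately and show that the first forces $A$ to be $\tau^*$-closed while the second forces $A$ to be $\tau^*$-open.

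\textbf{Step 2 (first case: $A\in\mathcal{I}$).} By Lemma \ref{gulkız}, $A^*=\emptyset$. Therefore $cl^*(A)=A\cup A^*=A$, which is exactly the condition that $A$ is $\tau^*$-closed (equivalently, $X\setminus A\in\tau^*$ by the defining condition $cl^*(X\setminus(X\setminus A))=X\setminus(X\setminus A)$).

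\textbf{Step 3 (second case: $X\setminus A\in\mathcal{I}$).} Again by Lemma \ref{gulkız}, $(X\setminus A)^*=\emptyset$, so
\[
cl^*(X\setminus A)=(X\setminus A)\cup (X\setminus A)^*=X\setminus A.
\]
By the very definition $\tau^*(\mathcal{I},\tau)=\{B\subseteq X\mid cl^*(X\setminus B)=X\setminus B\}$, this says $A\in \tau^*$, i.e.\ $A$ is $\tau^*$-open. Combining the two cases completes the proof. The main point worth noting (rather than an obstacle) is that the result is really a transparent consequence of the maximal-ideal dichotomy once one remembers that ideal members have empty local function.
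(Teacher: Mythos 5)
Your proof is correct and follows essentially the same route as the paper: both invoke the dichotomy $A\in\mathcal{I}$ or $X\setminus A\in\mathcal{I}$ from the maximal-ideal characterization, apply Lemma \ref{gulkız} to get an empty local function, and conclude via $cl^*$ (the paper phrases the second case through the $\Psi$-operator, but that is only a cosmetic difference from your direct use of the definition of $\tau^*$).
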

\begin{proof}
 Let $\mathcal{I}$ be a maximal ideal on $X$ and $A\subseteq X$.
$$\begin{array}{rcl} (\mathcal{I} \text{ is maximal ideal})(A\subseteq X)& \Rightarrow & A\in \mathcal{I} \vee (X\setminus A ) \in \mathcal{I}\\ & \Rightarrow &  A^*=\emptyset \vee (X\setminus A )^* = \emptyset \\ & \Rightarrow & A^*\cup A =A \vee \Psi( A )=X\setminus(X\setminus A )^* = X  \\ & \Rightarrow & cl^*(A)=A \vee A\subseteq \Psi( A ) \\ & \Rightarrow & A\in C(X,\tau^*) \vee A\in \tau^* . \qedhere
\end{array}$$
\end{proof}
\begin{corollary}
Let $(X,\tau,\mathcal{I})$ be an ideal topological space. If $\mathcal{I}$ is a maximal ideal, then $(X,\tau^*)$ is a $T_0$ space.
\end{corollary}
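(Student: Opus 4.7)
The plan is to derive this corollary as an almost immediate consequence of Theorem \ref{22} applied to singletons. Recall that a space is $T_0$ precisely when, for every pair of distinct points $x,y$, there is an open set containing exactly one of them. So I only need to produce such a separating open set, and Theorem \ref{22} will give it to me essentially for free.

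Fix two distinct points $x, y \in X$ and consider the singleton $A = \{x\} \subseteq X$. Since $\mathcal{I}$ is assumed to be maximal, Theorem \ref{22} applies and yields that $\{x\}$ is either $\tau^*$-open or $\tau^*$-closed. In the first case, $\{x\}$ itself is a $\tau^*$-open set containing $x$ but not $y$, which is all I need. In the second case, $X \setminus \{x\}$ is $\tau^*$-open, and this set contains $y$ but not $x$; again a suitable separating open set is produced.

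Since the argument works for an arbitrary pair of distinct points, this establishes the $T_0$ property of $(X, \tau^*)$. There is really no obstacle here: the entire content has already been packaged into Theorem \ref{22}, and the corollary is essentially a two-line observation once one notices that the trichotomy "$\{x\}$ is open or $\{x\}$ is closed" is exactly the data one needs to separate $x$ from any other point. The only tiny subtlety worth mentioning explicitly is that Theorem \ref{22} is stated for arbitrary subsets $A \subseteq X$, so applying it to the singleton $\{x\}$ requires no additional hypothesis.
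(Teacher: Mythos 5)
Your argument is correct and is precisely the intended content of the paper's proof, which simply cites Theorem \ref{22} without spelling out the application to singletons. Applying that theorem to $\{x\}$ and using either $\{x\}$ itself or its complement as the separating $\tau^*$-open set is exactly the right way to fill in the details.
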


\begin{proof}
This follows from Theorem \ref{22}.
\end{proof}

\begin{definition}
Let $\mathcal{I}$ be a proper ideal on $X$ such that $\mathcal{I}\neq \{\emptyset\}.$ Then, $\mathcal{I}$ is said to be a minimal ideal if for any ideal $\mathcal{J}$ with $\mathcal{J}\subseteq \mathcal{I}$, $\mathcal{I}= \mathcal{J}$ or $ \mathcal{J} = \{\emptyset\}$.
\end{definition}
\begin{theorem}
Let $\mathcal{I}$ be a proper ideal on $X.$ Then, $\mathcal{I}$ is a minimal ideal if and only if $A=B$ for all $A,B\in\mathcal{I}\setminus \{\emptyset\}.$  
\end{theorem}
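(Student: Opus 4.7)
The plan is to exploit the principal ideal construction $\mathcal{I}(A)=\{I : I\subseteq A\}$ introduced in Section 2, which is known to be an ideal for every $A\subseteq X$. Both directions will hinge on comparing $\mathcal{I}$ with such principal sub-ideals.

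For the forward direction, I will fix a minimal ideal $\mathcal{I}$ and arbitrary $A,B\in\mathcal{I}\setminus\{\emptyset\}$. Since $A\in\mathcal{I}$ and ideals are downward closed, $\mathcal{I}(A)\subseteq\mathcal{I}$; since $A\neq\emptyset$, we also have $\mathcal{I}(A)\neq\{\emptyset\}$. Minimality then forces $\mathcal{I}(A)=\mathcal{I}$, so $B\in\mathcal{I}(A)$, i.e.\ $B\subseteq A$. Swapping the roles of $A$ and $B$ yields $A\subseteq B$, hence $A=B$.

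For the reverse direction, the hypothesis forces $\mathcal{I}\setminus\{\emptyset\}$ to contain at most one element, and since we are in the non-trivial case $\mathcal{I}\neq\{\emptyset\}$, there is exactly one such element $A_{0}$, giving $\mathcal{I}=\{\emptyset,A_{0}\}$ with $A_{0}\neq\emptyset$. The main obstacle is to show that $A_{0}$ must in fact be a singleton: if $A_{0}$ contained two distinct points, then any proper non-empty subset $C\subsetneq A_{0}$ would lie in $\mathcal{I}$ by downward closure and would constitute a second element of $\mathcal{I}\setminus\{\emptyset\}$ distinct from $A_{0}$, contradicting the hypothesis. Consequently $\mathcal{I}=\{\emptyset,\{x\}\}$ for some $x\in X$, and minimality is then immediate: any ideal $\mathcal{J}\subseteq\mathcal{I}$ is non-empty and therefore contains $\emptyset$, so $\mathcal{J}\in\{\{\emptyset\},\mathcal{I}\}$.

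The only delicate point in the whole argument is this last structural observation that collapses $A_{0}$ to a singleton; once that is in hand, the minimality of $\mathcal{I}$ becomes a one-line verification, and the forward direction is essentially a direct application of the definition of $\mathcal{I}(A)$.
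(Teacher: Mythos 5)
Your proof is correct and follows essentially the same route as the paper: the forward direction compares $\mathcal{I}$ with the principal sub-ideals $\mathcal{I}(A)$ and invokes minimality, and the reverse direction reduces to the observation that $\mathcal{I}$ has exactly two elements, so any sub-ideal (which must contain $\emptyset$) is either $\{\emptyset\}$ or all of $\mathcal{I}$. Your detour through showing the unique non-empty member is a singleton is sound (downward closure forces it) but is not actually needed for minimality --- the two-element cardinality count already suffices, which is all the paper uses.
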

\begin{proof}
 $(\Rightarrow): $ Let $\mathcal{I} $ be a minimal ideal and $A,B\in \mathcal{I}\setminus \{\emptyset\}.$
 \\
 $
 \left. \begin{array}{r} 
 A,B\in \mathcal{I}\setminus \{\emptyset\}\Rightarrow (\mathcal{I}(A)\subseteq \mathcal{I})(\mathcal{I}(B)\subseteq \mathcal{I})  \\  \mathcal{I} \text{ is minimal}  \end{array}	\right\} \Rightarrow \mathcal{I}(A)=\mathcal{I}(B)=\mathcal{I} \Rightarrow A=B. $
 \\
 
$(\Leftarrow):$ Let $\mathcal{I}$ be an ideal and $\mathcal{J}\subseteq \mathcal{I}.$\\
$\left. \begin{array}{rr} 
 \mathcal{I} \text{ is an ideal } \\ \text{ Hypothesis } \end{array}	\right\}\Rightarrow \!\!\!\!\! \begin{array}{c} \\
\left. \begin{array}{r} 
 |\mathcal{I}| = 2  \\  \mathcal{J}\subseteq \mathcal{I} \end{array}	\right\} \Rightarrow (\mathcal{J}|= 1 \vee |\mathcal{J}|=2) \Rightarrow (\mathcal{J}=\{\emptyset\} \vee \mathcal{J}= \mathcal{I}). 
\end{array}$ 
\end{proof}
\begin{corollary}
Let $\mathcal{I}$ be a proper ideal on $X$. Then, the following statements are equivalent:
\\

$1)$ $\mathcal{I} $ is minimal ideal on $X;$\\

$2)$ $|\mathcal{I}|=2,$ where $|\mathcal{I}|$ is the cardinality of $\mathcal{I}.$ \\

$3)$ There exists a subset $A$ of $X$ such that $|A|=1$ and $\mathcal{I}=\mathcal{I}(A).$
\end{corollary}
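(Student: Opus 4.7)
The backbone of the argument is the preceding theorem, which characterizes minimality as the condition that every two non-empty members of $\mathcal{I}$ coincide; equivalently, that $\mathcal{I}\setminus\{\emptyset\}$ is a singleton. I would prove the three equivalences cyclically, $(1)\Rightarrow(2)\Rightarrow(3)\Rightarrow(1)$, since once the preceding theorem is invoked, each arrow reduces to a short set-theoretic observation.

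For $(1)\Rightarrow(2)$: by definition of a minimal ideal we have $\mathcal{I}\neq\{\emptyset\}$, so pick any $A\in\mathcal{I}\setminus\{\emptyset\}$. The preceding theorem forces $\mathcal{I}\setminus\{\emptyset\}=\{A\}$, and hence $\mathcal{I}=\{\emptyset,A\}$ has cardinality $2$. For $(2)\Rightarrow(3)$: write $\mathcal{I}=\{\emptyset,A\}$ with $A\neq\emptyset$. If $A$ contained two distinct points $x,y$, then by the hereditary property of an ideal we would have $\{x\}\in\mathcal{I}$ with $\{x\}\notin\{\emptyset,A\}$, contradicting $|\mathcal{I}|=2$. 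Thus $|A|=1$, and unpacking the definition yields $\mathcal{I}(A)=\{I\mid I\subseteq A\}=\{\emptyset,A\}=\mathcal{I}$.

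For $(3)\Rightarrow(1)$: writing $A=\{x\}$ gives $\mathcal{I}=\mathcal{I}(A)=\{\emptyset,\{x\}\}$, so $\mathcal{I}\setminus\{\emptyset\}=\{\{x\}\}$ is a singleton, and the characterization in the preceding theorem returns minimality (properness of $\mathcal{I}$ is part of the standing hypothesis, so it need not be rechecked).

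There is no substantial obstacle here; the proof is essentially bookkeeping once the prior characterization is invoked. The only subtle point worth flagging is the degenerate case $X=\{x\}$ under $(3)$, which would force $\mathcal{I}=2^X$ and hence fail to be proper; but the corollary is stated for a \emph{proper} ideal $\mathcal{I}$, so this case is excluded by hypothesis and causes no trouble.
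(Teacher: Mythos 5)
Your proof is correct and follows exactly the route the paper intends: the corollary is stated without proof as an immediate consequence of the preceding theorem (minimality $\Leftrightarrow$ all non-empty members of $\mathcal{I}$ coincide), and your cyclic argument $(1)\Rightarrow(2)\Rightarrow(3)\Rightarrow(1)$ is just the careful bookkeeping that makes that implication explicit. Your remark on the degenerate case $X=\{x\}$, where $\mathcal{I}(A)=2^X$ fails to be proper, is a worthwhile observation the paper does not make.
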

\begin{theorem}\label{17}
Let $X$ be a non-empty set and $A\subseteq X.$ Then, $\mathcal{I}(A)$ is a minimal ideal on $X$ if and only if $\mathcal{I}_\epsilon(A)$ is a maximal ideal on $X$.
\end{theorem}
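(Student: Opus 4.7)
The plan is to reduce both sides to the statement ``$A$ is a singleton'' using the corollary just proved (minimality of $\mathcal{I}(A)$ is equivalent to $|A|=1$) together with Theorem \ref{maxkar} (which characterizes maximal ideals by the dichotomy $B\in\mathcal{I}$ or $X\setminus B\in\mathcal{I}$). Unpacking the definition of $\mathcal{I}_\epsilon(A)$, this dichotomy becomes: for every $B\subseteq X$, either $A\cap B=\emptyset$ or $A\subseteq B$. So the whole theorem collapses to the set-theoretic equivalence ``$|A|=1$ if and only if every $B\subseteq X$ either misses $A$ entirely or contains all of $A$.''

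For the forward direction, I would assume $\mathcal{I}(A)$ is minimal, apply the preceding corollary to write $A=\{a\}$ for some $a\in X$, and then verify the Theorem \ref{maxkar} condition for $\mathcal{I}_\epsilon(A)$: for any $B\subseteq X$, either $a\in B$, in which case $A\subseteq B$ and hence $(X\setminus B)\cap A=\emptyset$ so $X\setminus B\in\mathcal{I}_\epsilon(A)$, or $a\notin B$, in which case $B\cap A=\emptyset$ so $B\in\mathcal{I}_\epsilon(A)$. Properness of $\mathcal{I}_\epsilon(A)$ is automatic since $X\cap A=A\neq\emptyset$.

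For the converse, I would assume $\mathcal{I}_\epsilon(A)$ is a maximal ideal. Properness forces $A\neq\emptyset$, so fix some $a\in A$. Testing the Theorem \ref{maxkar} dichotomy on the singleton $B=\{a\}$: since $a\in B\cap A$ we have $B\notin\mathcal{I}_\epsilon(A)$, which forces $X\setminus\{a\}\in\mathcal{I}_\epsilon(A)$, i.e.\ $A\cap(X\setminus\{a\})=\emptyset$, i.e.\ $A\subseteq\{a\}$. Combined with $a\in A$ this gives $A=\{a\}$, and then the corollary returns that $\mathcal{I}(A)$ is minimal.

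There is no real obstacle here; the argument is purely formal and the only thing to be careful about is handling properness/non-triviality on both sides ($A\neq\emptyset$ and $A\neq X$) so that the definitions of ``minimal'' and ``maximal'' (which exclude the trivial ideal and the improper ideal respectively) actually apply. Once the corollary is invoked, the rest is a one-line set-theoretic check plus the dichotomy from Theorem \ref{maxkar}.
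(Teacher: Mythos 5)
Your proof is correct. The forward direction is essentially the paper's: both invoke the corollary to write $A=\{a\}$ and then verify the dichotomy of Theorem \ref{maxkar} for $\mathcal{I}_\epsilon(A)$ by splitting on whether $a\in B$. The converse, however, takes a genuinely different (and arguably cleaner) route. The paper argues inside the lattice of ideals: for $x,y\in A$ it observes $\mathcal{I}_\epsilon(A)\subseteq\mathcal{I}_\epsilon(\{x\})$ and $\mathcal{I}_\epsilon(A)\subseteq\mathcal{I}_\epsilon(\{y\})$, invokes maximality to force $\mathcal{I}_\epsilon(\{x\})=\mathcal{I}_\epsilon(\{y\})$, and deduces $x=y$; this uses the order-theoretic definition of maximality directly rather than Theorem \ref{maxkar}. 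You instead apply the Theorem \ref{maxkar} dichotomy to the single test set $B=\{a\}$ for a chosen $a\in A$ and read off $A\subseteq\{a\}$ immediately. Your version has the additional merit of making explicit the non-triviality bookkeeping (properness of $\mathcal{I}_\epsilon(A)$ forces $A\neq\emptyset$, needed to pick $a$ and to conclude $|A|=1$ rather than $|A|\le 1$), a point the paper leaves implicit. Both arguments are sound; the paper's converse shows maximality being used as an extremality condition on ideals, while yours stays entirely at the level of the elementwise characterization.
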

\begin{proof}
$(\Rightarrow):$ Let $\mathcal{I}(A)$ be a minimal ideal on $X.$
\\
$\begin{array}{rcl}
\left. \begin{array}{rr} 
  \mathcal{I}(A)\text{ is minimal} \Rightarrow (\exists x\in X)(A=\{x\})(\mathcal{I}=\mathcal{I}(A))  \\ B\subseteq X  \end{array}	\right\}  \Rightarrow  
\end{array}$
\\
$\begin{array}{rr}
\Rightarrow A\cap B=\emptyset \vee A\cap (X\setminus B)=\emptyset\end{array} $
\\
$\begin{array}{l}
\Rightarrow B\in \mathcal{I}_\epsilon(A) \vee X\setminus B\in \mathcal{I}_\epsilon(A) 
\end{array}$

This means that $\mathcal{I}_\epsilon(A)$ is maximal ideal due to Theorem \ref{maxkar}.
\\
  
$(\Leftarrow):$ Let $\mathcal{I}_\epsilon(A)$ be a maximal ideal and $x,y\in A .$
\\
$\begin{array}{rcl}
\left. \begin{array}{rr} 
x,y\in A \Rightarrow (\mathcal{I}_\epsilon(A)\subseteq \mathcal{I}_\epsilon(\{x\}))(\mathcal{I}_\epsilon(A) \subseteq \mathcal{I}_\epsilon(\{y\}))\\ \mathcal{I}_\epsilon(A) \text{ is maximal}   \end{array}	\right\}   \Rightarrow   \end{array}$
\\
$
\begin{array}{l}
\Rightarrow \mathcal{I}_\epsilon(\{x\})=\mathcal{I}_\epsilon(\{y\})=\mathcal{I}_\epsilon(A) \\ 
\end{array}
$
\\
$
\left.\begin{array}{l}
\Rightarrow  x=y \\ x,y\in A \end{array}\right\}\Rightarrow |A|=1\Rightarrow   \mathcal{I}(A) \text{ is minimal}.
$
\end{proof}

\begin{corollary}
Let $\mathcal{I}$ be a proper ideal on $X$. Then, $\mathcal{I}$ is a maximal ideal if and only if there exists a singleton subset $A\subseteq X$ such that $\mathcal{I}=\mathcal{I}_\epsilon(A)$.
\end{corollary}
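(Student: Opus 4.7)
The plan is to derive both directions from Theorem \ref{17} together with Theorem \ref{maxkar}, reducing the task to locating a single point $x_0\in X$ with $X\setminus\{x_0\}\in\mathcal{I}$.

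For the $(\Leftarrow)$ direction, if $\mathcal{I}=\mathcal{I}_\epsilon(\{x\})$ for some $x\in X$, then the singleton-generated ideal $\mathcal{I}(\{x\})=\{\emptyset,\{x\}\}$ is minimal (its cardinality equals $2$, by the preceding corollary), and Theorem \ref{17} immediately yields that $\mathcal{I}_\epsilon(\{x\})=\mathcal{I}$ is maximal. This is essentially a one-line application of the earlier theorem.

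For the $(\Rightarrow)$ direction, suppose $\mathcal{I}$ is maximal. I would first apply Theorem \ref{maxkar} to $A=\{x\}$ for each $x\in X$ to obtain the dichotomy $\{x\}\in\mathcal{I}$ or $X\setminus\{x\}\in\mathcal{I}$. My next aim is to exhibit some $x_0\in X$ with $X\setminus\{x_0\}\in\mathcal{I}$. Given such an $x_0$, hereditariness of $\mathcal{I}$ provides the inclusion
$$\mathcal{I}_\epsilon(\{x_0\})=\{I\subseteq X\mid x_0\notin I\}=2^{X\setminus\{x_0\}}\subseteq\mathcal{I}.$$
Since $\mathcal{I}(\{x_0\})$ is minimal, Theorem \ref{17} ensures that $\mathcal{I}_\epsilon(\{x_0\})$ is itself a maximal ideal; because $\mathcal{I}$ is a proper ideal containing $\mathcal{I}_\epsilon(\{x_0\})$, the maximality of $\mathcal{I}_\epsilon(\{x_0\})$ in the ``upward'' sense forces $\mathcal{I}=\mathcal{I}_\epsilon(\{x_0\})$, which is exactly the desired conclusion.

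The main obstacle is the production of $x_0$, i.e. ruling out the degenerate alternative in which $\{x\}\in\mathcal{I}$ for every $x\in X$. Here the argument will lean on properness of $\mathcal{I}$ together with closure under finite union: if every singleton belonged to $\mathcal{I}$, then every finite union of singletons would too, and in particular (at least when $X$ is finite) one would obtain $X\in\mathcal{I}$, contradicting $\mathcal{I}\neq 2^X$. I expect this step to mirror the ``canonical form'' argument in the $(\Leftarrow)$ direction of Theorem \ref{17}, and it is the only place where care is genuinely required.
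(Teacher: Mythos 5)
Your $(\Leftarrow)$ direction and the reduction in the $(\Rightarrow)$ direction are both sound: once some $x_0$ with $X\setminus\{x_0\}\in\mathcal{I}$ is located, hereditariness gives $\mathcal{I}_\epsilon(\{x_0\})=2^{X\setminus\{x_0\}}\subseteq\mathcal{I}$, and the maximality of $\mathcal{I}_\epsilon(\{x_0\})$ supplied by Theorem \ref{17} together with properness of $\mathcal{I}$ forces equality. The paper gives no argument at all for this corollary beyond its placement after Theorem \ref{17}, so your reduction is already more explicit than anything in the text.

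The step you flag at the end, however, is a genuine gap and cannot be closed when $X$ is infinite. If $\{x\}\in\mathcal{I}$ for every $x\in X$, closure under \emph{finite} unions only puts finite sets into $\mathcal{I}$, which contradicts properness only when $X$ itself is finite. Indeed the $(\Rightarrow)$ direction of the corollary fails for infinite $X$: take $X=\mathbb{N}$ and let $\mathcal{I}_f$ be the ideal of finite subsets. The union of a chain of proper ideals containing $\mathcal{I}_f$ is again a proper ideal containing $\mathcal{I}_f$, so Zorn's lemma produces a proper ideal $\mathcal{M}\supseteq\mathcal{I}_f$ that is maximal in the paper's sense (any proper ideal containing $\mathcal{M}$ automatically contains $\mathcal{I}_f$ and hence equals $\mathcal{M}$). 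Since $\{x\}\in\mathcal{M}$ while $\{x\}\notin\mathcal{I}_\epsilon(\{x\})$ for every $x$, this $\mathcal{M}$ is not of the form $\mathcal{I}_\epsilon(\{x\})$ for any singleton; its dual filter is a free ultrafilter. So your argument establishes the corollary exactly for finite $X$, and no argument can do better: the defect lies in the statement itself, not in your strategy. If you want a correct general statement, either restrict to finite $X$ or weaken the conclusion to ``$\mathcal{I}_\epsilon(A)$ is maximal if and only if $|A|=1$,'' which is what Theorem \ref{17} actually delivers.
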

\section{Ideal Quotient}
\begin{lemma}
Let $\mathcal{I}$ be an ideal on $X$ and $\mathcal{J} \subseteq 2^X.$ Then, the family  $(\mathcal{I}:\mathcal{J})=\{ A \subseteq X| (\forall J \in  \mathcal{J}) (A\cap J \in\mathcal{I})\}$ is an ideal on $X$.
 
\end{lemma}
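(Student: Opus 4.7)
The plan is to verify directly that $(\mathcal{I}:\mathcal{J})$ satisfies the three defining conditions of an ideal given in the preliminaries: non-emptiness, heredity under subsets, and closure under finite unions. There is no real obstacle here; the proof is a routine transfer of the ideal axioms from $\mathcal{I}$ to the quotient family, using only the distributivity of $\cap$ over $\cup$.

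First I would observe that $\emptyset \in (\mathcal{I}:\mathcal{J})$, because $\emptyset \cap J = \emptyset \in \mathcal{I}$ for every $J \in \mathcal{J}$, which gives non-emptiness. Next I would handle heredity: given $A \in (\mathcal{I}:\mathcal{J})$ and $B \subseteq A$, then for each $J \in \mathcal{J}$ we have $B \cap J \subseteq A \cap J \in \mathcal{I}$, so since $\mathcal{I}$ is itself hereditary, $B \cap J \in \mathcal{I}$, whence $B \in (\mathcal{I}:\mathcal{J})$.

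Finally, for closure under finite unions, take $A, B \in (\mathcal{I}:\mathcal{J})$ and any $J \in \mathcal{J}$. By the distributive law,
\[
(A \cup B) \cap J \;=\; (A \cap J) \cup (B \cap J).
\]
Both $A \cap J$ and $B \cap J$ lie in $\mathcal{I}$ by hypothesis, and since $\mathcal{I}$ is closed under finite unions, their union belongs to $\mathcal{I}$. Therefore $(A \cup B) \cap J \in \mathcal{I}$ for every $J \in \mathcal{J}$, so $A \cup B \in (\mathcal{I}:\mathcal{J})$.

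Since the three axioms are verified, $(\mathcal{I}:\mathcal{J})$ is an ideal on $X$. The only subtlety worth flagging is that the statement does not require $\mathcal{J}$ to be an ideal or even non-empty; if $\mathcal{J} = \emptyset$, the defining condition is vacuously satisfied by every subset of $X$, and one recovers $(\mathcal{I}:\mathcal{J}) = 2^X$, which is still an ideal in the sense of the paper.
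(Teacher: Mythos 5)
Your proof is correct and follows essentially the same route as the paper's: a direct verification that $(\mathcal{I}:\mathcal{J})$ is hereditary and closed under finite unions, using $B\cap J\subseteq A\cap J$ for heredity and the distributive law $(A\cup B)\cap J=(A\cap J)\cup(B\cap J)$ for unions. Your explicit check of non-emptiness via $\emptyset$ and the remark about $\mathcal{J}=\emptyset$ are small additions the paper omits, but they do not change the argument.
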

 \begin{proof} 
Let $A\in (\mathcal{I}:\mathcal{J})$ and $B\subseteq A.$ We will prove that $(\mathcal{I}:\mathcal{J})$ is downward closed.
\\ 
$\left.\begin{array}{rr} A\in (\mathcal{I}:\mathcal{J}) \Rightarrow (\forall J \in \mathcal{J} ) (A\cap J \in \mathcal{I}) \\ B\subseteq A \end{array} \right\}\Rightarrow\!\!\!\!\!\! \begin{array}{rr} \\  \left. \begin{array}{rr} (\forall J \in \mathcal{J} ) (B\cap J \subseteq A\cap J \in \mathcal{I}) \\ \mathcal{I} \text{ is an ideal} \end{array} \right\} \Rightarrow \end{array}$
\\
\\
$
\begin{array}{l}
\Rightarrow (\forall J \in \mathcal{J} ) (B\cap J \in \mathcal{I})
\end{array}
$
\\
$
\begin{array}{l}
\Rightarrow B \in (\mathcal{I}:\mathcal{J}).
\end{array}
$
\\

Now, let $A,B \in  (\mathcal{I}:\mathcal{J}).$ We will prove that $A\cap B \in (\mathcal{I}:\mathcal{J}).$
\\
$
\begin{array}{rr} 
A,B\in (\mathcal{I}:\mathcal{J}) \Rightarrow (\forall J \in \mathcal{J})(A\cap J \in \mathcal{I})(\forall J \in \mathcal{J})(B\cap J \in \mathcal{I}) 
\end{array}
$
\\
$\left. \begin{array}{rr} 
\Rightarrow (\forall J \in \mathcal{J})(A\cap J \in \mathcal{I})(B\cap J \in \mathcal{I}) \\ \mathcal{I} \text{ is an ideal} \end{array}	\right\}\Rightarrow (\forall J\in\mathcal{J})((A\cap J)\cup (B\cap J) \in \mathcal{I})$
\\
$\begin{array}{l}
\Rightarrow (\forall J\in\mathcal{J})((A\cup B)\cap J=(A\cap J)\cup (B\cap J) \in \mathcal{I})
\end{array}$
\\
$
\begin{array}{l}
\Rightarrow A\cup B \in (\mathcal{I}:\mathcal{J}).
\end{array}
$
  \end{proof}
 \begin{definition}
 Let $\mathcal{I}$ be an ideal on $X$ and $\mathcal{J} \subseteq 2^X.$ Then, the family $(\mathcal{I}:\mathcal{J})$ is called ideal quotient.
\end{definition}

 \begin{theorem} \label{1}
Let $\mathcal{I}$ and $\mathcal{I}'$ be two ideals on $X$ and $\mathcal{J} ,\mathcal{J}'\subseteq 2^X.$ Then, the following properties hold:\\

$a)$ $\mathcal{I}\subseteq (\mathcal{I}:\mathcal{J});$\\
 
$b)$ $\mathcal{J}\subseteq \mathcal{I}$ if and only if $(\mathcal{I}:\mathcal{J}) = 2^X;$\\

$c)$ if $\mathcal{J}\subseteq \mathcal{J}',$ then $(\mathcal{I}:\mathcal{J}') \subseteq (\mathcal{I}:\mathcal{J});$\\
 
$d)$ if $X\in \mathcal{J},$ then $(\mathcal{I}:\mathcal{J}) = \mathcal{I};$\\
 
$e)$ $(\mathcal{I}\cap \mathcal{I}':\mathcal{J}) = (\mathcal{I}:\mathcal{J})\cap(\mathcal{I}':\mathcal{J}).$ 
 \end{theorem}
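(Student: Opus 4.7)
The plan is to prove each of the five properties directly from the definition $(\mathcal{I}:\mathcal{J})=\{A\subseteq X\mid (\forall J\in\mathcal{J})(A\cap J\in\mathcal{I})\}$ by straightforward set-theoretic manipulation, using only that $\mathcal{I}$ and $\mathcal{I}'$ are ideals (downward closed, closed under finite unions). None of these requires a clever construction; the main task is organizing the quantifiers cleanly.

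For part (a), I will take any $A\in\mathcal{I}$ and any $J\in\mathcal{J}$, observe $A\cap J\subseteq A\in\mathcal{I}$, then invoke downward closure of $\mathcal{I}$. For part (b), the forward direction starts from $\mathcal{J}\subseteq\mathcal{I}$ and, for arbitrary $A\subseteq X$ and $J\in\mathcal{J}$, uses $A\cap J\subseteq J\in\mathcal{I}$ to place $A$ in $(\mathcal{I}:\mathcal{J})$; the converse plugs $X\in 2^X=(\mathcal{I}:\mathcal{J})$ into the defining condition to conclude $J=X\cap J\in\mathcal{I}$ for each $J\in\mathcal{J}$. Part (c) is a direct logical weakening: if the condition $A\cap J\in\mathcal{I}$ holds for every $J\in\mathcal{J}'$, it holds a fortiori for every $J\in\mathcal{J}\subseteq\mathcal{J}'$. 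Part (d) combines (a) with the specific choice $J=X$: any $A\in(\mathcal{I}:\mathcal{J})$ satisfies $A=A\cap X\in\mathcal{I}$.

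Part (e) is the only one with content in both directions, but it is still formal. For $\subseteq$, if $A\cap J\in\mathcal{I}\cap\mathcal{I}'$ for every $J\in\mathcal{J}$, then separately $A\cap J\in\mathcal{I}$ and $A\cap J\in\mathcal{I}'$ for every $J$, giving $A\in(\mathcal{I}:\mathcal{J})\cap(\mathcal{I}':\mathcal{J})$. For $\supseteq$, I take $A$ in the right-hand intersection, fix $J\in\mathcal{J}$, and recombine $A\cap J\in\mathcal{I}$ and $A\cap J\in\mathcal{I}'$ into $A\cap J\in\mathcal{I}\cap\mathcal{I}'$; hence $A\in(\mathcal{I}\cap\mathcal{I}':\mathcal{J})$. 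One small point worth stating explicitly before the calculation is that $\mathcal{I}\cap\mathcal{I}'$ is itself an ideal, so the notation $(\mathcal{I}\cap\mathcal{I}':\mathcal{J})$ is meaningful under the previous lemma.

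There is no real obstacle here; the only thing to be careful about is the edge case $\mathcal{J}=\emptyset$ in part (b), since then the defining condition $(\forall J\in\mathcal{J})(A\cap J\in\mathcal{I})$ is vacuously true, forcing $(\mathcal{I}:\mathcal{J})=2^X$, which is consistent with the statement because $\mathcal{J}=\emptyset\subseteq\mathcal{I}$. I will either mention this explicitly or phrase the converse direction of (b) so that it handles empty $\mathcal{J}$ trivially (the inclusion $\emptyset\subseteq\mathcal{I}$ is automatic). Apart from this bookkeeping, each proof is a one- or two-line implication chain of exactly the style already used earlier in the paper.
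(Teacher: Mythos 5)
Your proposal is correct and follows essentially the same route as the paper: each part is verified directly from the definition of $(\mathcal{I}:\mathcal{J})$ using downward closure of the ideals, with part $(e)$ handled by splitting the membership condition into the two coordinates. The only additions beyond the paper's argument are minor bookkeeping (the vacuous case $\mathcal{J}=\emptyset$ in $(b)$ and the remark that $\mathcal{I}\cap\mathcal{I}'$ is an ideal), both of which are harmless and sensible.
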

 
\begin{proof}
$a)$ Let $A\in \mathcal{I}.$ We will show that $A\in (\mathcal{I}:\mathcal{J})$.
$$\left. \begin{array}{rr} 
A\in \mathcal{I}\Rightarrow (\forall J\in\mathcal{J})(A\cap J\subseteq A\in\mathcal{I})  \\ \mathcal{I}\text{ is ideal} \end{array}	\right\}\Rightarrow (\forall J\in\mathcal{J})(A\cap J \in \mathcal{I})\Rightarrow A\in (\mathcal{I}:\mathcal{J}).$$
 
$b)$ $(\Rightarrow):$ Let $\mathcal{J}\subseteq \mathcal{I}$ and $A\in 2^X.$
$$\begin{array}{rcl} 
(\mathcal{J}\subseteq \mathcal{I})(A\in 2^X)&\Rightarrow & (\forall J\in\mathcal{J})(J\in \mathcal{I})(A\cap J\subseteq J) \\ &\Rightarrow & (\forall J\in\mathcal{J})(A\cap J \in \mathcal{I}) \\ &\Rightarrow & A\in (\mathcal{I}:\mathcal{J}).\end{array}$$
 
$(\Leftarrow):$ Let $(\mathcal{I}:\mathcal{J})=2^X$ and $J\in \mathcal{J}.$ We will show that $J\in\mathcal{I}$.
$$\left. \begin{array}{rr} 
 (\mathcal{I} :\mathcal{J})=2^X\Rightarrow X\in (\mathcal{I} :\mathcal{J}) \\ J\in\mathcal{J} \end{array}	\right\}\Rightarrow J=X\cap J \in \mathcal{I}.$$

$c)$ Let $\mathcal{J}\subseteq \mathcal{J}'$ and $A\in (\mathcal{I}:\mathcal{J}').$ We will show that $A\in (\mathcal{I}:\mathcal{J}).$
$$\left. \begin{array}{rr} 
A\in (\mathcal{I}:\mathcal{J}') \Rightarrow (\forall J \in \mathcal{J}' ) (A\cap J \in \mathcal{I}) \\ \mathcal{J}\subseteq \mathcal{J}' \end{array}	\right\}
  \Rightarrow (\forall J \in \mathcal{J} ) (A\cap J \in \mathcal{I})\Rightarrow A\in (\mathcal{I}:\mathcal{J}).$$

$d)$ Let $X\in \mathcal{J}$ and $A\in (\mathcal{I} :\mathcal{J}).$ We will show that $A\in \mathcal{I}.$
\\

$\left.\begin{array}{rr} A\in (\mathcal{I} :\mathcal{J})\Rightarrow (\forall J\in \mathcal{J})(A\cap J\in \mathcal{I}) \\ X\in \mathcal{J}\end{array}\right\}\Rightarrow A=A\cap X \in \mathcal{I}.$
\\

$e)$ Let $A \subseteq X.$
$$\begin{array}{rcl} A \in (\mathcal{I}\cap \mathcal{I}':\mathcal{J})& \Leftrightarrow &(\forall J \in \mathcal{J})(A\cap J \in \mathcal{I}\cap \mathcal{I}') \\ & \Leftrightarrow & (\forall J \in \mathcal{J}) (A\cap J \in \mathcal{I}\wedge A\cap J \in \mathcal{I}') \\ & \Leftrightarrow &(\forall J \in \mathcal{J})(A\cap J \in \mathcal{I})\wedge (\forall J \in \mathcal{J}) (A\cap J \in \mathcal{I}')   \\ & \Leftrightarrow & A \in (\mathcal{I}:\mathcal{J})\wedge A\in (\mathcal{I}':\mathcal{J}) \\ & \Leftrightarrow & A \in (\mathcal{I}:\mathcal{J})\cap(\mathcal{I}':\mathcal{J}) . \qedhere
\end{array}$$
\end{proof}

\begin{remark}
The converse of the implications given in Theorem \ref{1} $(c),(d)$ need not always to be true as shown by the following examples. 
\end{remark}

\begin{example}
Let $X=\{a,b,c\}$ and $\mathcal{I}=\{\emptyset,\{a\}\}.$ For the subfamilies $\mathcal{J}=\{\{a\},\{a,c\}\}$ and $\mathcal{J}'=\{\{a\},\{a,b\},\{a,c\}\}$ of $2^X,$ we have $(\mathcal{I}:\mathcal{J})=\{\emptyset,\{a\},\{b\},\{a,b\}\}$ and $(\mathcal{I}:\mathcal{J}')=\{\emptyset,\{a\}\}.$
\\

$a)$ $(\mathcal{I}:\mathcal{J}')\subseteq (\mathcal{I}:\mathcal{J}),$ but $\mathcal{J}\nsubseteq \mathcal{J}';$
\\

$b)$ $(\mathcal{I}:\mathcal{J}')=\mathcal{I},$ but $X\notin \mathcal{J}.$
\end{example}

\begin{corollary}
Let $\mathcal{I}$ be an ideal on $X$ and $\tau \subseteq 2^X.$ Then, the following statements hold. \\

  a) $(\mathcal{I}:\mathcal{I}) = (2^X:\mathcal{I}) = (\mathcal{I}:\{\emptyset\} ) = 2^X;$\\
  
  b) if $\tau$ is a topology on $X,$ then  $(\mathcal{I}:\tau) = \mathcal{I}.$
 \end{corollary}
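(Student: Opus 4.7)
The plan is to derive both statements as direct consequences of parts (b) and (d) of the preceding Theorem 4.3, without any substantive new work.

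For part (a), I would handle each of the three equalities by a single application of Theorem 4.3(b), which says that $(\mathcal{I}:\mathcal{J}) = 2^X$ if and only if $\mathcal{J}\subseteq \mathcal{I}$. For $(\mathcal{I}:\mathcal{I})$, the inclusion $\mathcal{I}\subseteq \mathcal{I}$ is trivial, so the identity follows. For $(2^X:\mathcal{I})$, I would apply Theorem 4.3(b) with the ambient ideal taken to be $2^X$ itself (which is clearly an ideal on $X$), and the inclusion $\mathcal{I}\subseteq 2^X$ is automatic. For $(\mathcal{I}:\{\emptyset\})$, I would use that $\emptyset \in \mathcal{I}$ since $\mathcal{I}$ is an ideal, hence $\{\emptyset\}\subseteq \mathcal{I}$, and apply Theorem 4.3(b) once more.

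For part (b), I would invoke Theorem 4.3(d): if $X\in \mathcal{J}$, then $(\mathcal{I}:\mathcal{J}) = \mathcal{I}$. Since $\tau$ is a topology on $X$, we have $X\in \tau$, and the conclusion $(\mathcal{I}:\tau) = \mathcal{I}$ is immediate.

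There is no real obstacle here; the corollary is essentially an unpacking of Theorem 4.3 on specific choices of $\mathcal{J}$. The only mild point worth stating explicitly is that $2^X$ qualifies as an ideal on $X$ so that the notation $(2^X:\mathcal{I})$ is legitimate within the framework of the paper, and that $\{\emptyset\}$ is contained in every ideal. Once these trivialities are noted, the proof reduces to citing parts (b) and (d) of the earlier theorem.
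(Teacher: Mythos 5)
Your proposal is correct and matches the paper's intent exactly: the paper states this as an immediate corollary of the preceding theorem without writing a proof, and the intended derivation is precisely your applications of part (b) (with $\mathcal{J}=\mathcal{I}$, with the ideal $2^X$, and with $\mathcal{J}=\{\emptyset\}\subseteq\mathcal{I}$) and part (d) (using $X\in\tau$). Nothing is missing.
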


\begin{remark}  Let $\mathcal{I}$ be an ideal on $X$ and $\mathcal{J} \subseteq 2^X.$ Then, $\mathcal{I}$ and $(\mathcal{I}:\mathcal{J})$ need not to be equal as shown by the following example. 
\end{remark} 
\begin{example}
Let $X=\{a,b,c\},$  $\mathcal{J}=\{\{a\}, \{a,c\} \}$ and $\mathcal{I}=\{\emptyset,\{a\}\}.$ Then, simple calculations show that $(\mathcal{I}:\mathcal{J})=\{\emptyset , \{a\}, \{b\},\{a,b\}\}$ which is not equal to $\mathcal{I}$.
\end{example}

\begin{theorem}\label{2}
Let $\mathcal{I}$ be a proper ideal on $X.$ Then,
$\mathcal{I}$ is a maximal ideal if and only if $(\mathcal{I}:\mathcal{J}) = \mathcal{I}$ for each $\mathcal{J}\subseteq 2^X$ with the property $\mathcal{J}\nsubseteq \mathcal{I}.$ 
\end{theorem}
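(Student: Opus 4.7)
The plan is to prove both directions by reducing to the characterization of maximal ideals in Theorem \ref{1} of Section 3, namely that $\mathcal{I}$ is maximal iff $A\cap B\in\mathcal{I}$ forces $A\in\mathcal{I}$ or $B\in\mathcal{I}$. The inclusion $\mathcal{I}\subseteq(\mathcal{I}:\mathcal{J})$ is already available for free from part (a) of Theorem \ref{1} in Section 4, so in every step only the reverse inclusion (or its consequence) is at stake.

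For the forward direction, I will assume $\mathcal{I}$ is maximal and fix $\mathcal{J}\subseteq 2^X$ with $\mathcal{J}\nsubseteq\mathcal{I}$. Pick a witness $J_0\in\mathcal{J}\setminus\mathcal{I}$. Given any $A\in(\mathcal{I}:\mathcal{J})$, the definition of the ideal quotient forces $A\cap J_0\in\mathcal{I}$; then the maximality characterization yields $A\in\mathcal{I}$ or $J_0\in\mathcal{I}$, and since $J_0\notin\mathcal{I}$ we conclude $A\in\mathcal{I}$. Combined with $\mathcal{I}\subseteq(\mathcal{I}:\mathcal{J})$, this gives the equality.

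For the backward direction, I will verify the hypothesis of Theorem \ref{1} from Section 3. Assume $A\cap B\in\mathcal{I}$ and $B\notin\mathcal{I}$; I need to conclude $A\in\mathcal{I}$. The trick is to pick the smallest possible test family, namely $\mathcal{J}:=\{B\}$. Then $\mathcal{J}\nsubseteq\mathcal{I}$ because $B\notin\mathcal{I}$, so the hypothesis $(\mathcal{I}:\mathcal{J})=\mathcal{I}$ applies. Since the only element of $\mathcal{J}$ is $B$ and $A\cap B\in\mathcal{I}$, we have $A\in(\mathcal{I}:\mathcal{J})=\mathcal{I}$, as required.

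I expect no serious obstacle: both directions collapse to one-line applications once the right objects are fixed. The only point that requires a moment's care is the choice of the test family $\mathcal{J}=\{B\}$ in the converse, because the statement quantifies over all subfamilies $\mathcal{J}\subseteq 2^X$ (not just ideals), which is precisely what makes a singleton family legitimate and lets the argument go through cleanly.
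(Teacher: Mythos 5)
Your proof is correct and takes essentially the same route as the paper: both directions reduce to the intersection characterization of maximality (Theorem \ref{1} of Section 3), and the converse hinges on testing the hypothesis against a singleton family built from a set outside $\mathcal{I}$. The only cosmetic difference is that the paper argues the converse by contradiction with $\mathcal{J}=\{X\setminus A\}$ and the complement characterization, whereas you argue directly with $\mathcal{J}=\{B\}$; your observation that $\mathcal{J}$ need not be an ideal is exactly the point that makes both versions work.
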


\begin{proof}
  $(\Rightarrow):$ Let $\mathcal{I} $ be a maximal ideal and $\mathcal{J}\subseteq 2^X $. Let $ \mathcal{J}\nsubseteq \mathcal{I}$ and $A \in (\mathcal{I}:\mathcal{J}).$ We will prove that $A\in \mathcal{I}.$
  \\
  
 $\left. \begin{array}{rr} 
 A\in (\mathcal{I}:\mathcal{J})\Rightarrow (\forall J \in \mathcal{J}) (A\cap J \in \mathcal{I}) \\ \mathcal{I} \text{ is a maximal ideal} \end{array}	\right\}\Rightarrow (\forall J\in\mathcal{J})(A\in \mathcal{I} \vee J\in \mathcal{I})$
\\
$\left. \begin{array}{r} 
\Rightarrow A\in \mathcal{I} \vee (\forall J\in\mathcal{J})(J\in \mathcal{I}) \Rightarrow  (A\in \mathcal{I}\vee \mathcal{J}\subseteq \mathcal{I}) \\ \mathcal{J}\nsubseteq \mathcal{I} \end{array}	\right\} \Rightarrow  A\in \mathcal{I}.$
\\

$(\Leftarrow):$ Suppose that $\mathcal{I} $ is not a maximal ideal on $X.$
\\

 $\left. \begin{array}{rr} 
 \mathcal{I} \text{ is not a maximal ideal}\Rightarrow (\exists A\subseteq X)(A\notin \mathcal{I} \wedge X\setminus A \notin \mathcal{I} )\\ \mathcal{J}:=\{X\setminus A\}  \end{array}	\right\}\Rightarrow $
\\
$
\begin{array}{l}
\Rightarrow  (A\notin \mathcal{I})(\mathcal{J}\nsubseteq \mathcal{I})(A\in (\mathcal{I} :\mathcal{J}))
\end{array}
$
\\
$
\begin{array}{l}
\Rightarrow  (\mathcal{J}\nsubseteq \mathcal{I})((\mathcal{I} :\mathcal{J})\neq\mathcal{I})
\end{array}
$

This contradicts by hypothesis.
\end{proof}

\begin{corollary}
Let $\mathcal{I}$ be a proper ideal on $X.$ Then, $\mathcal{I}$ is a maximal ideal if and only if $(\mathcal{I}:\mathcal{J}) =\mathcal{I}$ or $(\mathcal{I}:\mathcal{J}) =2^X$ for all $\mathcal{J}\subseteq 2^X.$
\end{corollary}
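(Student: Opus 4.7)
The plan is to derive this corollary directly from Theorem \ref{2} together with Theorem \ref{1}(b), by splitting on whether $\mathcal{J}\subseteq \mathcal{I}$ or not.

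For the forward implication, assume $\mathcal{I}$ is maximal and fix an arbitrary $\mathcal{J}\subseteq 2^X$. I would consider the dichotomy $\mathcal{J}\subseteq \mathcal{I}$ versus $\mathcal{J}\nsubseteq \mathcal{I}$. In the first case, Theorem \ref{1}(b) gives $(\mathcal{I}:\mathcal{J})=2^X$ immediately. In the second case, Theorem \ref{2} (which characterizes maximality by the equality $(\mathcal{I}:\mathcal{J})=\mathcal{I}$ whenever $\mathcal{J}\nsubseteq \mathcal{I}$) yields $(\mathcal{I}:\mathcal{J})=\mathcal{I}$. So in either case the required conclusion holds.

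For the converse, suppose $(\mathcal{I}:\mathcal{J})\in\{\mathcal{I},2^X\}$ for every $\mathcal{J}\subseteq 2^X$, and aim to verify the criterion supplied by Theorem \ref{2}, namely that $(\mathcal{I}:\mathcal{J})=\mathcal{I}$ whenever $\mathcal{J}\nsubseteq\mathcal{I}$. Fix such a $\mathcal{J}$; by the ``if and only if'' in Theorem \ref{1}(b), $\mathcal{J}\nsubseteq\mathcal{I}$ forces $(\mathcal{I}:\mathcal{J})\neq 2^X$, so the hypothesis leaves only $(\mathcal{I}:\mathcal{J})=\mathcal{I}$. This is precisely the condition of Theorem \ref{2}, hence $\mathcal{I}$ is maximal.

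There is really no significant obstacle here, since the corollary is essentially a repackaging of Theorem \ref{2}; the only point requiring care is to invoke Theorem \ref{1}(b) as a biconditional so that $\mathcal{J}\nsubseteq \mathcal{I}$ genuinely rules out the value $(\mathcal{I}:\mathcal{J})=2^X$ in the converse direction, allowing the trichotomy ``$\mathcal{I}$ or $2^X$'' offered by the hypothesis to collapse to $\mathcal{I}$.
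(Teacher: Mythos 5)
Your proof is correct and follows essentially the same route as the paper, which simply cites the preceding characterization of maximality via ideal quotients together with part (b) of the theorem on properties of $(\mathcal{I}:\mathcal{J})$; you have merely written out the case split $\mathcal{J}\subseteq\mathcal{I}$ versus $\mathcal{J}\nsubseteq\mathcal{I}$ that the paper leaves implicit. The one point you rightly emphasize---using the biconditional in Theorem~\ref{1}(b) to exclude the value $2^X$ in the converse direction---is exactly what makes the one-line citation work.
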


\begin{proof}
This follows from Theorems \ref{22} and \ref{1}.
\end{proof}

\section{Annihilator of a Set Family}

\begin{definition}
 Let $X$ be a non-empty set and $\mathcal{J}\subseteq 2^X$. If $\mathcal{I}=\{\emptyset\} $, then the ideal quotient $(\{\emptyset\}:\mathcal{J}) $ is called  the annihilator  of  $\mathcal{J}$ and denoted by $Ann(\mathcal{J}).$ The notation $Ann_A$ will be used to denote $Ann(\{A\}),$ where $A\subseteq X.$
\end{definition}
\begin{corollary}
Let $X$ be a non-empty set and $A\subseteq X$. It is not difficult to see that $Ann_A = Ann(\mathcal{I}(A)).$ 
\end{corollary}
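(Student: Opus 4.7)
The claim unwraps to: $Ann_A=\{B\subseteq X \mid B\cap A=\emptyset\}$ coincides with $Ann(\mathcal{I}(A))=\{B\subseteq X \mid (\forall I\subseteq A)(B\cap I=\emptyset)\}$. The plan is to prove this set equality by two inclusions, both essentially immediate from the definitions.

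For the inclusion $Ann(\mathcal{I}(A))\subseteq Ann_A$, the key observation is that $A\subseteq A$, so $A\in \mathcal{I}(A)$ and hence $\{A\}\subseteq \mathcal{I}(A)$. Applying the monotonicity property of the ideal quotient established earlier in Section~4 (namely, $\mathcal{J}\subseteq \mathcal{J}'$ implies $(\mathcal{I}:\mathcal{J}')\subseteq(\mathcal{I}:\mathcal{J})$) with $\mathcal{I}=\{\emptyset\}$, $\mathcal{J}=\{A\}$ and $\mathcal{J}'=\mathcal{I}(A)$, I would conclude
$$Ann(\mathcal{I}(A))=(\{\emptyset\}:\mathcal{I}(A))\subseteq (\{\emptyset\}:\{A\})=Ann_A.$$

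For the reverse inclusion $Ann_A\subseteq Ann(\mathcal{I}(A))$, I would argue directly. Take any $B\in Ann_A$, so $B\cap A=\emptyset$. For an arbitrary $I\in \mathcal{I}(A)$ one has $I\subseteq A$, whence $B\cap I\subseteq B\cap A=\emptyset$, so $B\cap I=\emptyset$. Since $I\in \mathcal{I}(A)$ was arbitrary, this says precisely $B\in (\{\emptyset\}:\mathcal{I}(A))=Ann(\mathcal{I}(A))$.

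There is no serious obstacle here: the whole statement rests on the two trivial facts that $A$ is itself a member of the principal ideal $\mathcal{I}(A)$ and that $\mathcal{I}(A)$ is downward-closed. The only minor bookkeeping is to cite the correct part of the earlier ideal-quotient theorem rather than re-deriving the monotonicity; either that, or one can simply collapse the argument into a single biconditional chain, observing that $(\forall I\subseteq A)(B\cap I=\emptyset)$ holds iff it holds for the maximal such $I$, namely $A$ itself.
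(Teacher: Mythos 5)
Your proof is correct and matches the paper's reasoning: the paper (in the later theorem showing $Ann(\mathcal{I}(A))=Ann_A=\mathcal{I}_\epsilon(A)$) uses exactly the biconditional chain you mention at the end, namely that $(\forall I\subseteq A)(B\cap I=\emptyset)$ holds iff $B\cap A=\emptyset$ since $A$ is the maximal member of $\mathcal{I}(A)$. Your two-inclusion version, with the citation of the ideal-quotient monotonicity for one direction, is just a slightly more formal packaging of the same argument.
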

\begin{definition}
Let $X$ be a non-empty set and $\mathcal{J}\subseteq 2^X$. Then, $\mathcal{J}$ is said to be faithful if $Ann(\mathcal{J})=\{\emptyset\}.$
\end{definition}
\begin{corollary}
 Every topology on $X$ is faithful.
\end{corollary}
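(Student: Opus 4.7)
The plan is to unpack the definition of faithful and then invoke Theorem \ref{1}(d) with $\mathcal{I} = \{\emptyset\}$. By definition, a topology $\tau$ on $X$ is faithful exactly when $Ann(\tau) = (\{\emptyset\} : \tau) = \{\emptyset\}$, so the whole task reduces to computing this ideal quotient.

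First I would observe that any topology $\tau$ on $X$ satisfies $X \in \tau$. This is the only axiom of a topology we actually need; the intersection/union closure properties play no role here. With $X \in \tau$ in hand, part (d) of Theorem \ref{1}, applied to the ideal $\mathcal{I} = \{\emptyset\}$ and the subfamily $\mathcal{J} = \tau$, yields immediately
\[
Ann(\tau) = (\{\emptyset\} : \tau) = \{\emptyset\}.
\]
Hence $\tau$ is faithful.

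There is essentially no obstacle: the statement is a one-line consequence of a theorem already established. Alternatively, for a completely self-contained argument, one could spell out the computation: if $A \in Ann(\tau)$, then $A \cap J = \emptyset$ for every $J \in \tau$; specializing to $J = X \in \tau$ forces $A = A \cap X = \emptyset$, and conversely $\emptyset$ trivially lies in $Ann(\tau)$. Either phrasing is acceptable, and since the corollary is labelled a corollary, quoting Theorem \ref{1}(d) is the cleanest route.
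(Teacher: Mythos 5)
Your proof is correct and follows exactly the route the paper intends: the corollary is an immediate consequence of Theorem 4.3(d) (equivalently, the earlier corollary stating $(\mathcal{I}:\tau)=\mathcal{I}$ for any topology $\tau$) applied to the ideal $\mathcal{I}=\{\emptyset\}$, using only that $X\in\tau$. The paper leaves this unproved as obvious, and your one-line argument (with the optional self-contained computation) fills it in correctly.
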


\begin{lemma}\label{5}
Let $\mathcal{I}$ be an ideal on $X.$ Then, $\mathcal{I}\cap Ann(\mathcal{I})=\{\emptyset\}$.
\end{lemma}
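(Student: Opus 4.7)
The plan is to prove equality by showing both inclusions, with the nontrivial direction following from a single clever instantiation.

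First I would unwind the definition: by the definition of annihilator, $Ann(\mathcal{I})=(\{\emptyset\}:\mathcal{I})=\{A\subseteq X \mid (\forall J\in\mathcal{I})(A\cap J=\emptyset)\}$, since the only subset of $\emptyset$ is $\emptyset$ itself.

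For the inclusion $\{\emptyset\}\subseteq \mathcal{I}\cap Ann(\mathcal{I})$, I would just observe that $\emptyset\in\mathcal{I}$ because $\mathcal{I}$ is an ideal (nonempty and downward closed, so $\emptyset$ belongs to it), and $\emptyset\in Ann(\mathcal{I})$ because $\emptyset\cap J=\emptyset$ for every $J\in\mathcal{I}$.

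For the reverse inclusion $\mathcal{I}\cap Ann(\mathcal{I})\subseteq \{\emptyset\}$, I would take an arbitrary $A\in \mathcal{I}\cap Ann(\mathcal{I})$ and apply the annihilator condition with the specific choice $J:=A$, which is legitimate precisely because $A\in\mathcal{I}$. This gives $A=A\cap A=\emptyset$, so $A\in\{\emptyset\}$. There is no real obstacle here; the only step requiring any thought is noticing that membership in $\mathcal{I}$ allows one to feed $A$ into its own annihilating condition, which immediately collapses $A$ to $\emptyset$.
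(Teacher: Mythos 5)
Your proof is correct and uses the same key step as the paper's: taking $A\in\mathcal{I}\cap Ann(\mathcal{I})$ and instantiating the annihilator condition at $J:=A$ to get $A=A\cap A=\emptyset$. The extra verification of the trivial inclusion $\{\emptyset\}\subseteq\mathcal{I}\cap Ann(\mathcal{I})$ is harmless; the paper leaves it implicit.
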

\begin{proof}
 Let $A\in \mathcal{I} \cap Ann(\mathcal{I}).$ We will show that $A=\emptyset.$
\\

$\begin{array}{l} A\in \mathcal{I} \cap Ann(\mathcal{I}) \Rightarrow (A\in \mathcal{I})(A\in Ann(\mathcal{I})) \Rightarrow A\cap A  =\emptyset \Rightarrow A=\emptyset.
\end{array}$
\end{proof}

\begin{theorem}
Let $X$ be an infinite set. Then, the family of all finite subsets of $X,$ denoted by  $\mathcal{I}_f,$ is a faithful ideal on $X$.
\end{theorem}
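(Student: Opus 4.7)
The plan is to verify the two assertions packed into the statement separately: first that $\mathcal{I}_f$ satisfies the two ideal axioms (non-emptiness being automatic since $\emptyset$ is finite), and second that $\mathcal{I}_f$ is faithful in the sense of the definition just introduced, namely $Ann(\mathcal{I}_f)=\{\emptyset\}$.

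For the ideal axioms, I would note that any subset of a finite set is finite, giving downward closure, and that the union of two finite sets is finite, giving closure under finite unions. The hypothesis that $X$ is infinite is used here only to guarantee that $X \notin \mathcal{I}_f$, so that $\mathcal{I}_f$ is a genuine (proper) ideal and not all of $2^X$.

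For faithfulness, I would unfold the definitions:
\[
Ann(\mathcal{I}_f)=(\{\emptyset\}:\mathcal{I}_f)=\{A\subseteq X \mid (\forall J\in\mathcal{I}_f)(A\cap J=\emptyset)\}.
\]
The inclusion $\{\emptyset\}\subseteq Ann(\mathcal{I}_f)$ is immediate since $\emptyset\cap J=\emptyset$ for every $J$. For the reverse inclusion, suppose $A\in Ann(\mathcal{I}_f)$ with $A\neq\emptyset$, and pick any $x\in A$. Since $\{x\}$ is finite, $\{x\}\in\mathcal{I}_f$; but then $A\cap\{x\}=\{x\}\neq\emptyset$, contradicting $A\in Ann(\mathcal{I}_f)$. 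Hence $A=\emptyset$, which gives $Ann(\mathcal{I}_f)\subseteq\{\emptyset\}$.

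There is no real obstacle here: the argument is entirely a matter of unwinding definitions, and the only subtlety is spotting that the crucial finite sets to test against are the singletons $\{x\}\subseteq X$. If one wished, one could phrase the second half more economically by appealing to the easy observation that the ideal $\mathcal{I}(\{x\})\subseteq\mathcal{I}_f$ for every $x\in X$, together with monotonicity of annihilator (Theorem~\ref{1}(c) with $\mathcal{I}=\{\emptyset\}$), to conclude $Ann(\mathcal{I}_f)\subseteq\bigcap_{x\in X}Ann_{\{x\}}=\{\emptyset\}$, but the direct singleton argument above is shorter.
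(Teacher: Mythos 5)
Your proof is correct and takes essentially the same approach as the paper: both arguments reduce faithfulness to testing a nonempty $A\in Ann(\mathcal{I}_f)$ against a singleton $\{x\}\subseteq A$, which belongs to $\mathcal{I}_f$. The only cosmetic difference is that the paper reaches the contradiction by invoking its lemma $\mathcal{I}\cap Ann(\mathcal{I})=\{\emptyset\}$ whereas you contradict the definition of the annihilator directly via $A\cap\{x\}=\{x\}\neq\emptyset$ (and you additionally spell out the routine verification that $\mathcal{I}_f$ is an ideal, which the paper omits).
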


\begin{proof}
Suppose that $Ann(\mathcal{I}_f)\neq \{\emptyset\}.$
$$\begin{array}{rcl}Ann(\mathcal{I}_f)\neq \{\emptyset\} & \Rightarrow & (\exists A\in Ann(\mathcal{I}_f))(A\neq \emptyset) \\ & \Rightarrow & (\exists x\in X)(x\in A) \\ & \Rightarrow & (\{x\}\subseteq A)(|\{x\}|=1<\aleph_0) \\ & \Rightarrow & (\{x\}\in Ann(\mathcal{I}_f))(\{x\}\in\mathcal{I}_f) \\ & \Rightarrow & \{x\}\in \mathcal{I}_f \cap Ann(\mathcal{I}_f)  
\end{array}$$

This contradicts with Lemma \ref{5}.
\end{proof}
\begin{theorem}
Let $\mathcal{I}$ be a  faithful ideal  on $X.$ Then, $Ann(Ann(\mathcal{I}))=2^X.$
\end{theorem}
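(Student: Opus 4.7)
The plan is to unwind the definitions in two short steps, which makes the statement essentially a one-line computation once the faithfulness hypothesis is applied.

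First, I would use the hypothesis: since $\mathcal{I}$ is faithful, the definition of faithfulness gives $Ann(\mathcal{I}) = \{\emptyset\}$. So the outer annihilator reduces to $Ann(Ann(\mathcal{I})) = Ann(\{\emptyset\})$, and the task is just to identify this latter family with $2^X$.

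Next, I would expand $Ann(\{\emptyset\})$ through its definition as the ideal quotient $(\{\emptyset\} : \{\emptyset\})$. By the definition of ideal quotient, a set $A \subseteq X$ lies in $(\{\emptyset\} : \{\emptyset\})$ if and only if $A \cap J \in \{\emptyset\}$ for every $J \in \{\emptyset\}$, i.e., if and only if $A \cap \emptyset = \emptyset$. Since this holds trivially for every subset $A$ of $X$, we obtain $(\{\emptyset\} : \{\emptyset\}) = 2^X$. Alternatively, one could invoke Theorem \ref{1}$(b)$ directly with $\mathcal{I} = \{\emptyset\}$ and $\mathcal{J} = \{\emptyset\}$: the inclusion $\{\emptyset\} \subseteq \{\emptyset\}$ immediately gives $(\{\emptyset\} : \{\emptyset\}) = 2^X$.

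Chaining these two observations yields $Ann(Ann(\mathcal{I})) = Ann(\{\emptyset\}) = 2^X$, completing the proof. There is really no obstacle here; the only subtlety is making sure the reader sees that the faithful hypothesis is being used precisely to convert the nested annihilator into an annihilator of the simple family $\{\emptyset\}$, after which the conclusion is a definitional consequence (or a direct application of Theorem \ref{1}$(b)$).
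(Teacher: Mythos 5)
Your proposal is correct and follows essentially the same route as the paper: faithfulness gives $Ann(\mathcal{I})=\{\emptyset\}$, and then $Ann(\{\emptyset\})=(\{\emptyset\}:\{\emptyset\})=2^X$, which the paper likewise obtains by citing the quotient identity $(\mathcal{I}:\mathcal{J})=2^X$ whenever $\mathcal{J}\subseteq\mathcal{I}$. Your direct unwinding of the definition (every $A$ satisfies $A\cap\emptyset=\emptyset$) is a harmless extra verification of that same fact.
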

\begin{proof} Let $\mathcal{I}$ be a faithful ideal.\\
     $\left.\begin{array}{rr} \mathcal{I} \text{ is  faithful} \Rightarrow  Ann(\mathcal{I})= \{\emptyset\} \Rightarrow  Ann(Ann(\mathcal{I}))=Ann(\{\emptyset\} )=(\{\emptyset\}:\{\emptyset\}) \\ \text{Theorem }\ref{1}(b)
\end{array}\right\}\Rightarrow $
\\
$\begin{array}{l}
\Rightarrow  Ann(Ann(\mathcal{I}))=2^X.   
\end{array}$
\end{proof}


\begin{theorem}
If $\mathcal{I}$ is not a faithful ideal on $X$, then $Ann(Ann(\mathcal{I}))=\mathcal{I}.$
\end{theorem}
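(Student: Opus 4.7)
The plan is to prove $Ann(Ann(\mathcal{I}))=\mathcal{I}$ by double inclusion, with the non-faithfulness hypothesis invoked only in the non-trivial direction. First I would dispose of the inclusion $\mathcal{I}\subseteq Ann(Ann(\mathcal{I}))$, which is a biduality-style fact that requires no hypothesis on $\mathcal{I}$. Pick $A\in\mathcal{I}$; to show $A\in Ann(Ann(\mathcal{I}))=(\{\emptyset\}:Ann(\mathcal{I}))$ I need $A\cap B=\emptyset$ for every $B\in Ann(\mathcal{I})$. But $B\in Ann(\mathcal{I})=(\{\emptyset\}:\mathcal{I})$ says precisely that $B\cap I=\emptyset$ for every $I\in\mathcal{I}$, and specializing to $I:=A$ closes this half.

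For the reverse inclusion $Ann(Ann(\mathcal{I}))\subseteq\mathcal{I}$ I would first exploit the hypothesis to produce a concrete large witness inside $Ann(\mathcal{I})$. Set $U:=\bigcup\mathcal{I}$; unfolding the definitions gives $B\in Ann(\mathcal{I})\Leftrightarrow B\subseteq X\setminus U$. Non-faithfulness, i.e.\ $Ann(\mathcal{I})\neq\{\emptyset\}$, is then equivalent to $B_{0}:=X\setminus U$ being non-empty, in which case $B_{0}\in Ann(\mathcal{I})$. Now if $A\in Ann(Ann(\mathcal{I}))$, testing the defining disjointness condition against $B_{0}$ yields $A\cap B_{0}=\emptyset$, hence $A\subseteq U=\bigcup\mathcal{I}$. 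From here I would combine Lemma \ref{5} (which forces $\mathcal{I}\cap Ann(\mathcal{I})=\{\emptyset\}$) with part $(b)$ of Theorem \ref{1} applied to the family $\{A\}$ to upgrade $A\subseteq\bigcup\mathcal{I}$ to $A\in\mathcal{I}$.

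The step I expect to be the main obstacle is exactly this final promotion from $A\subseteq\bigcup\mathcal{I}$ to $A\in\mathcal{I}$, since an ideal is only closed under \emph{finite} unions and $\bigcup\mathcal{I}$ itself need not belong to $\mathcal{I}$. The cleanest attack, in the chain-of-implications style used throughout the paper, is a contradiction argument: suppose $A\in Ann(Ann(\mathcal{I}))$ but $A\notin\mathcal{I}$, and, using $A\notin\mathcal{I}$ together with the non-faithfulness of $\mathcal{I}$, build a $B\in Ann(\mathcal{I})$ that meets $A$ non-trivially, contradicting the defining condition of $Ann(Ann(\mathcal{I}))$. Producing such a $B$ from only non-faithfulness, rather than from a principal-ideal structure on $\mathcal{I}$, is where the real work of the proof lies.
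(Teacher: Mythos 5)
Your first inclusion $\mathcal{I}\subseteq Ann(Ann(\mathcal{I}))$ is correct, and your reduction of the reverse inclusion is also correct as far as it goes: with $U:=\bigcup\mathcal{I}$ one has $Ann(\mathcal{I})=\{B\mid B\subseteq X\setminus U\}$ and hence $Ann(Ann(\mathcal{I}))=\{A\mid A\subseteq U\}=\mathcal{I}(U)$, so the whole theorem reduces to the single claim that $A\subseteq\bigcup\mathcal{I}$ implies $A\in\mathcal{I}$. But the obstacle you flag at that point is not a technical one that more work will remove: that implication is false, and therefore so is the theorem as stated. Take $X=\mathbb{N}\cup\{\ast\}$ and let $\mathcal{I}$ be the ideal of all finite subsets of $\mathbb{N}$, viewed as subsets of $X$. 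Then $Ann(\mathcal{I})=\{\emptyset,\{\ast\}\}\neq\{\emptyset\}$, so $\mathcal{I}$ is not faithful, yet $Ann(Ann(\mathcal{I}))=2^{\mathbb{N}}\supsetneq\mathcal{I}$ because $\mathbb{N}\notin\mathcal{I}$. Non-faithfulness only says $\bigcup\mathcal{I}\neq X$; it does not force $\mathcal{I}=\mathcal{I}\bigl(\bigcup\mathcal{I}\bigr)$, which is what the conclusion requires. Your fallback contradiction argument also cannot be carried out here: for $A=\mathbb{N}$ there is no $B\in Ann(\mathcal{I})$ meeting $A$, and the appeal to the ideal-quotient theorem with $\mathcal{J}=\{A\}$ merely restates $A\in\mathcal{I}$, which is the thing to be proved.

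For what it is worth, the paper's own proof founders on exactly the same rock, only less visibly. In its second case it introduces $T:=\bigcap\{A\setminus I\mid I\cap A\neq\emptyset\}$ and asserts without justification that $T\neq\emptyset$; in the example above with $A=\mathbb{N}$ this intersection is empty, so the promised non-empty witness in $Ann(\mathcal{I})$ contained in $A$ does not exist. The statement is true precisely for ideals of the form $\mathcal{I}=\mathcal{I}(E)$ with $E\subsetneq X$ (equivalently, when $\bigcup\mathcal{I}\in\mathcal{I}$), which covers every non-faithful ideal on a finite set but not the general case. So you were right to distrust the final step; the honest conclusion is that no proof exists without an added hypothesis such as $\bigcup\mathcal{I}\in\mathcal{I}$.
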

\begin{proof}
Let $A\notin \mathcal{I}.$ There are two cases. Now, we will process them.
\\

\textit{First case}: Let $A\cap I=\emptyset$ for all $I\in\mathcal{I}.$
\\
$\left.\begin{array}{rr}(\forall I\in\mathcal{I})(A\cap I=\emptyset)\Rightarrow A\in Ann(\mathcal{I}) \\ A\notin \mathcal{I}\Rightarrow A\neq \emptyset \end{array}\right\}\overset{\text{Lemma } \ref{5}}{\Rightarrow} A\notin Ann(Ann(\mathcal{I}))\ldots (1)$
\\

\textit{Second case}: Suppose that there exists $I\in\mathcal{I}$ such that $A\cap I\neq \emptyset.$ \\ Now, let $T:=\bigcap\{A\setminus I|I\cap A\neq \emptyset\}.$ 
\\
$\begin{array}{rcl} T:=\bigcap\{A\setminus I|I\cap A\neq \emptyset\}&\Rightarrow& (\emptyset\neq T\subseteq A)(\forall I\in\mathcal{I})(I\cap T=\emptyset)\\ &\Rightarrow& (\emptyset\neq T\subseteq A)(T\in Ann (\mathcal{I})) \\ &\overset{\text{Lemma } \ref{5}}{\Rightarrow} & (\emptyset\neq T\subseteq A)(T\notin Ann(Ann (\mathcal{I}))) \\ &\Rightarrow & A\notin Ann(Ann (\mathcal{I}))\ldots (2)\end{array} $
\\

Then, we have $Ann (Ann(\mathcal{I}))\subseteq \mathcal{I}$ from (1) and (2).
\\

Now, let $A \in \mathcal{I}$ and $J\in Ann(\mathcal{I})$. We will show that $A\in Ann(Ann(\mathcal{I})).$
\\

$\left.\begin{array}{r}
J\in Ann(\mathcal{I}) \Rightarrow (\forall I\in\mathcal{I})(I\cap J=\emptyset) \\ A\in\mathcal{I}  
\end{array}\right\}\Rightarrow A\cap J=\emptyset$
\\

Then, we have $A\in Ann(Ann(\mathcal{I}))$ and so $\mathcal{I} \subseteq Ann(Ann(\mathcal{I}))\ldots (3)$

$
(2),(3)\Rightarrow Ann(Ann(\mathcal{I}))=\mathcal{I}.
$
\end{proof}

\begin{theorem}
Let $\mathcal{I}$ be an ideal on $X$ and $A \subseteq X.$ Then, $Ann(\mathcal{I}(A))= Ann_A=\mathcal{I}_\epsilon(A).$
\end{theorem}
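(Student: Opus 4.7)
The plan is to prove the two equalities separately, both by a direct unpacking of definitions. Write $B \in Ann(\mathcal{J})$ as $(\forall J \in \mathcal{J})(B \cap J = \emptyset)$, which is what $(\{\emptyset\} : \mathcal{J})$ reduces to since the only subset of $\emptyset$ is $\emptyset$ itself.

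For the equality $Ann(\mathcal{I}(A)) = Ann_A$, I would first note that it has already been recorded as a corollary in the excerpt, but it is worth re-deriving it in-line for clarity: if $B \in Ann(\mathcal{I}(A))$, then in particular $B \cap A = \emptyset$ since $A \in \mathcal{I}(A)$, giving $B \in Ann_A = Ann(\{A\})$. Conversely, if $B \in Ann_A$, i.e.\ $B \cap A = \emptyset$, then for every $I \in \mathcal{I}(A)$ we have $I \subseteq A$, hence $B \cap I \subseteq B \cap A = \emptyset$, so $B \in Ann(\mathcal{I}(A))$.

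For the equality $Ann_A = \mathcal{I}_\epsilon(A)$, the chain of equivalences is immediate:
$$B \in Ann_A \ \Leftrightarrow\  B \cap A = \emptyset \ \Leftrightarrow\  B \in \mathcal{I}_\epsilon(A),$$
using the definition of $\mathcal{I}_\epsilon(A)$ given in the Preliminaries.

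There is no substantial obstacle here; the whole proof is a short two-line verification. The only thing worth being careful about is pointing out why the annihilator collapses to pairwise disjointness (because $B \cap J \in \{\emptyset\}$ forces $B \cap J = \emptyset$), after which both equalities are tautological.
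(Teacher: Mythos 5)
Your proof is correct and follows essentially the same route as the paper: both arguments unpack the definition of the annihilator and use the observation that $B\cap I=\emptyset$ for all $I\subseteq A$ is equivalent to $B\cap A=\emptyset$. The only cosmetic difference is that you spell out the two inclusions that the paper compresses into a single chain of set equalities.
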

\begin{proof}
It is obvious that $Ann_A=\mathcal{I}_{\epsilon}(A).$ Also, we have
$$\begin{array}{rcl} Ann(\mathcal{I}(A)) & = &  \{J | (\forall I \in \mathcal{I}(A) ) (J \cap I =\emptyset )\} \\ & = & \{J | (\forall I\subseteq A) ) (J \cap I =\emptyset )\} \\ & = & \{J |  J \cap A =\emptyset\} \\  & = & \mathcal{I}_\epsilon(A). \qedhere
\end{array}$$
\end{proof}
\begin{corollary} \label{MaxAnn}
Let $\mathcal{I}$ be an ideal on $X.$ Then, $\mathcal{I}$ is a minimal ideal if and only if $Ann(\mathcal{I})$ is a maximal ideal on $X.$ 
\end{corollary}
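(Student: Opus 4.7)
The plan is to deduce this corollary from the preceding theorem $Ann(\mathcal{I}(A))=\mathcal{I}_\epsilon(A)$, Theorem \ref{17}, and the fact that both minimal and maximal ideals admit simple characterizations via singletons (the Corollaries surrounding Theorem \ref{17}).

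For the forward direction, I would assume $\mathcal{I}$ is minimal. By the earlier Corollary characterizing minimal ideals, there exists $x\in X$ with $\mathcal{I}=\mathcal{I}(\{x\})$. The preceding theorem then yields $Ann(\mathcal{I})=Ann(\mathcal{I}(\{x\}))=\mathcal{I}_\epsilon(\{x\})$, and Theorem \ref{17} gives at once that this is a maximal ideal.

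For the backward direction, assume $Ann(\mathcal{I})$ is maximal. By the Corollary following Theorem \ref{17}, there is a singleton $\{x\}\subseteq X$ with $Ann(\mathcal{I})=\mathcal{I}_\epsilon(\{x\})$. To conclude that $\mathcal{I}$ itself is minimal, I must recover the shape of $\mathcal{I}$ from $Ann(\mathcal{I})$. The key observation is the identity $Ann(\mathcal{I})=\mathcal{I}_\epsilon(\bigcup\mathcal{I})$, which holds because $J\cap I=\emptyset$ for every $I\in\mathcal{I}$ is equivalent to $J\cap(\bigcup\mathcal{I})=\emptyset$. Combined with the injectivity of the map $B\mapsto\mathcal{I}_\epsilon(B)$ (verified by testing singletons $\{y\}$), this forces $\bigcup\mathcal{I}=\{x\}$. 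Hence every $I\in\mathcal{I}$ satisfies $I\subseteq\{x\}$, while $\{x\}$ must itself belong to $\mathcal{I}$, giving $\mathcal{I}=\mathcal{I}(\{x\})$, which is minimal.

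The main obstacle I expect is the backward implication: the annihilator operation is generally lossy, so one cannot simply invert it without extra information. The workaround is the observation that $Ann(\mathcal{I})$ depends only on $\bigcup\mathcal{I}$, which, combined with the singleton characterization of maximal ideals, suffices to pin down $\mathcal{I}$ uniquely.
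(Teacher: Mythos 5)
Your proposal is correct and follows the same route as the paper, which simply cites Theorem \ref{17} together with the identity $Ann(\mathcal{I}(A))=\mathcal{I}_\epsilon(A)$ and the singleton characterizations of minimal and maximal ideals. Your backward direction, via the observation $Ann(\mathcal{I})=\mathcal{I}_\epsilon\left(\bigcup\mathcal{I}\right)$ and the injectivity of $B\mapsto\mathcal{I}_\epsilon(B)$, correctly supplies the one step the paper leaves implicit, namely recovering $\mathcal{I}$ from its annihilator.
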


\begin{proof}
This follows from Theorem \ref{17}.    
\end{proof}

\begin{theorem}\label{dens}
 Let  $\left(X,\tau,\mathcal{I}\right)$ be an ideal topological space and $A\subseteq X.$ Then, $A$ is a dense set in $X$ if and only if $\left(X,\tau,Ann_A\right)$ is a Hayashi-Samuel space.
\end{theorem}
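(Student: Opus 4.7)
The plan is to unwind the two definitions involved and observe that they are logically equivalent. Recall that denseness of $A$ in $(X,\tau)$ means $cl(A)=X$, which is well known to be equivalent to the condition that every non-empty open set $U \in \tau$ satisfies $U \cap A \neq \emptyset$. On the other hand, by the preceding theorem and the definition of $Ann_A$, the Hayashi-Samuel condition $\tau \cap Ann_A = \{\emptyset\}$ unfolds to: the only $U \in \tau$ with $U \cap A = \emptyset$ is $U=\emptyset$. These two statements are clearly contrapositives of each other.

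For the forward direction, I would assume $A$ is dense and pick an arbitrary $U \in \tau \cap Ann_A$. Membership in $Ann_A$ gives $U \cap A = \emptyset$; combined with $U \in \tau$ and denseness of $A$, this forces $U = \emptyset$. Hence $\tau \cap Ann_A \subseteq \{\emptyset\}$, and since $\emptyset$ lies in both $\tau$ and $Ann_A$, equality holds.

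For the reverse direction, I would assume $\tau \cap Ann_A = \{\emptyset\}$ and take any non-empty $U \in \tau$. If $U \cap A = \emptyset$, then $U \in Ann_A$ by definition, so $U \in \tau \cap Ann_A = \{\emptyset\}$, i.e. $U = \emptyset$, contradicting the choice of $U$. Hence every non-empty open set meets $A$, so $A$ is dense.

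There is no real obstacle here; the entire argument is a direct translation between the topological formulation of denseness and the ideal-theoretic condition defining $Ann_A$. The only care needed is to remember that $Ann_A = \mathcal{I}_\epsilon(A) = \{J \subseteq X : J \cap A = \emptyset\}$, which was established in the corollary/theorem just before this statement.
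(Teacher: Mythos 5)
Your proposal is correct and follows essentially the same route as the paper's proof: both directions reduce to the equivalence between ``every non-empty open set meets $A$'' and ``no non-empty open set lies in $Ann_A=\{J\subseteq X: J\cap A=\emptyset\}$.'' Nothing further is needed.
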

\begin{proof}
$(\Rightarrow ): $ Let $A$ be a dense set in $X$ and $J\in \tau \cap Ann_A$. We will show that $J=\emptyset.$
\\ 
$\left. \begin{array}{rr} 
J\in \tau \cap Ann_A \Rightarrow (J\in \tau)(J\in Ann_A)\Rightarrow (J\in \tau)(A\cap J = \emptyset) \\ A \text{ is dense in } X\Rightarrow cl(A)= X\Rightarrow (\forall x\in X)(\forall U\in\tau(x))(U\cap A\neq\emptyset) \end{array}	\right\}
\!\Rightarrow\! J = \emptyset.$ 
\\ 

$(\Leftarrow ): $ Let $(X,\tau,Ann_A)$ be a Hayashi-Samuel space. We will show that $cl(A)=X.$ Let $x\in X $ and $U\in \tau(x).$ We will show that $U\cap A\neq\emptyset.$
  \\ 
  $\left. \begin{array}{rr} 
   (x\in X)(U \in\tau(x)) \Rightarrow  U\in \tau\setminus \{\emptyset\} \\ (X,\tau,Ann_A) \text{ is a Hayashi-Samuel space}\Rightarrow \tau \cap Ann_A = \{\emptyset\}\end{array}	\right\}
   \Rightarrow U\notin Ann_A$
   \\
   $\begin{array}{l}\Rightarrow U\cap A\neq \emptyset\end{array}$
     
   Then, we have $x\in cl(A)$ and so $X\subseteq cl(A).$ On the other hand, we have always $cl(A)\subseteq X.$ Thus, $cl(A)=X$ i.e. $A$ is a dense set in $X.$
\end{proof}

\section{Sharp Operator and Sharp Topology}

\begin{definition}
Let $\left(X,\tau,\mathcal{I}\right)$ be an ideal topological space. Then any subset $A$ of $X,$ $A^{\sharp}\left(\mathcal{I},\tau\right):=\{ x\in X|\left(\forall U\in \tau\left(x\right)\right)(\exists I\in \mathcal{I}\setminus\{\emptyset\}) \left(I\cap (U\cap A)^c=\emptyset\right)\}$ is called the sharp function of $A$ with respect to $\mathcal{I}$ and $\tau.$ If there is no ambiguity, we will write $A^{\sharp}(\mathcal{I})$ or simply $A^{\sharp}$ for  $A^{\sharp}(\mathcal{I},\tau).$
\end{definition}

\begin{theorem}\label{7}
Let  $\left(X,\tau,\mathcal{I}\right)$ be an ideal topological space and $A\subseteq X.$ Then, $A^{\sharp}\left(\mathcal{I},\tau\right)= A^{*}\left(Ann(\mathcal{I}),\tau\right).$ 
\end{theorem}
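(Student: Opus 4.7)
The plan is to reduce the equality to a pointwise equivalence and then verify that equivalence by unpacking both definitions to their set-theoretic content. Fix $x \in X$ and $A \subseteq X$; it suffices to show the two defining conditions on $x$ are logically equivalent.

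First I would expand the right-hand side. By definition of the local function,
\[x \in A^{*}(Ann(\mathcal{I}),\tau) \iff (\forall U \in \tau(x))(U \cap A \notin Ann(\mathcal{I})).\]
Since $Ann(\mathcal{I}) = (\{\emptyset\}:\mathcal{I}) = \{B \subseteq X \mid (\forall I \in \mathcal{I})(B \cap I = \emptyset)\}$, the membership $U \cap A \notin Ann(\mathcal{I})$ is equivalent to the existence of some $I \in \mathcal{I}$ with $(U \cap A) \cap I \neq \emptyset$. Next, I would unpack the left-hand side: $x \in A^{\sharp}(\mathcal{I},\tau)$ means that for every $U \in \tau(x)$ there exists $I \in \mathcal{I} \setminus \{\emptyset\}$ with $I \cap (U \cap A)^{c} = \emptyset$, which is just $I \subseteq U \cap A$ with $I$ a non-empty member of $\mathcal{I}$.

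Since the outer quantifier "$\forall U \in \tau(x)$" is the same on both sides, the proof reduces to showing, for each fixed $U$, the equivalence
\[\bigl(\exists I \in \mathcal{I}\setminus\{\emptyset\}\bigr)\bigl(I \subseteq U \cap A\bigr) \iff \bigl(\exists I \in \mathcal{I}\bigr)\bigl(I \cap (U \cap A) \neq \emptyset\bigr).\]
The forward implication is immediate: if $\emptyset \neq I \subseteq U \cap A$, then $I \cap (U \cap A) = I \neq \emptyset$. For the converse, given $I \in \mathcal{I}$ with $I \cap (U \cap A) \neq \emptyset$, set $J := I \cap (U \cap A)$. Then $J \subseteq I \in \mathcal{I}$, so downward-closedness of $\mathcal{I}$ yields $J \in \mathcal{I}$; moreover $J \neq \emptyset$ and $J \subseteq U \cap A$, so $J$ is the desired witness on the sharp side.

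Chaining these equivalences gives $x \in A^{\sharp}(\mathcal{I},\tau) \iff x \in A^{*}(Ann(\mathcal{I}),\tau)$, hence equality of the two sets. There is no genuine obstacle here; the argument is pure definition chasing. The only point that requires a moment's care is using downward closure of $\mathcal{I}$ to pass from a bare non-empty intersection to a non-empty ideal element contained in $U \cap A$, which is what bridges the "intersects" formulation of the annihilator with the "contains a non-empty ideal set" formulation of the sharp operator.
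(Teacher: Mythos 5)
Your proof is correct and follows essentially the same route as the paper: a pointwise chain of equivalences unpacking both definitions, with the key bridge being the equivalence between "some non-empty $I\in\mathcal{I}$ is contained in $U\cap A$" and "some $I\in\mathcal{I}$ meets $U\cap A$." The only difference is that you make explicit (via downward closure of $\mathcal{I}$) the step the paper leaves implicit in its chain of biconditionals.
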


\begin{proof}
Let $A\subseteq X.$
$$\begin{array}{rcl}x \in  A^{\sharp}\left(\mathcal{I},\tau\right) & \Leftrightarrow & (\forall U\in \tau(x)) (\exists I\in \mathcal{I}\setminus\{\emptyset\} ) (I\cap (U\cap A)^c=\emptyset )\\ & \Leftrightarrow & (\forall U\in \tau(x)) (\exists I\in \mathcal{I}\setminus\{\emptyset\} ) (I\subseteq U\cap A )\\ & \Leftrightarrow & (\forall U\in \tau(x)) (\exists I\in \mathcal{I}\setminus\{\emptyset\} ) ( I\cap U\cap A \neq \emptyset ) \\ & \Leftrightarrow & (\forall U\in \tau(x))  ( U\cap A \notin Ann(\mathcal{I}))  \\ & \Leftrightarrow & x\in A^{*}\left(Ann(\mathcal{I}),\tau\right). \qedhere
\end{array}$$
\end{proof}

\begin{theorem}
 Let  $\left(X,\tau,\mathcal{I}\right)$ be an ideal topological space. Then, the following statements hold:
 \\

    $a)$ $A\subseteq B \Rightarrow A^\sharp \subseteq B^\sharp ,$
    \\
    
   $b)$ $A^\sharp = cl(A^\sharp) \subseteq cl(A),$
    \\ 
    
    $c)$ $(A\cap B)^\sharp \subseteq A^\sharp \cap B^\sharp ,$
    \\
    
    $d)$ $(A\cup B)^\sharp = A^\sharp \cup B^\sharp ,$
    \\
    
    $e)$ \ $A^\sharp \setminus B^\sharp \subseteq (A\setminus B)^\sharp ,$
    \\
    
    $f)$ $A\in Ann(\mathcal{I}) \Rightarrow A^\sharp = \emptyset,$
    \\
    
    $g)$ $A \in Ann(\mathcal{I}) \Rightarrow (A\cup B)^\sharp = B^\sharp = (A\setminus B )^\sharp,$
    \\
    
    $h)$ if $\mathcal{I} \text{ is faithful, then } A^\sharp = cl(A).$
\end{theorem}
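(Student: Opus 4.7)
The overall plan is to lean on Theorem \ref{7}, which gives the identification $A^\sharp(\mathcal{I},\tau) = A^*\bigl(Ann(\mathcal{I}),\tau\bigr)$. Writing $\mathcal{K} := Ann(\mathcal{I})$, every assertion (a)--(h) becomes a statement about the classical Kuratowski local function $(\cdot)^*$ with respect to the ideal $\mathcal{K}$, and these are already well documented in the ideal-topology literature (cf.\ Jankovic--Hamlett \cite{jan}, Vaidyanathaswamy \cite{vai}). So the proof will be a systematic translation, and the only genuinely new input is the observation used for (h).

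For (a)--(e), I would just cite the corresponding standard properties of $(\cdot)^*$ for the ideal $\mathcal{K}$: monotonicity gives (a); the fact that $A^*$ is always closed and contained in $cl(A)$ gives (b); subadditivity on intersections gives (c); the additivity $(A\cup B)^* = A^* \cup B^*$ gives (d); and the inclusion $A^*\setminus B^*\subseteq (A\setminus B)^*$ gives (e). None of these properties require any separation or Hayashi--Samuel hypothesis on $\mathcal{K}$, so the translation is immediate via Theorem \ref{7}.

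For (f) and (g), the key remark is that the hypothesis $A \in Ann(\mathcal{I})$ means exactly that $A$ belongs to the ideal $\mathcal{K}$ with respect to which we are taking the local function. Therefore Lemma \ref{gulkız} (applied with $\mathcal{K}$ in place of $\mathcal{I}$) gives $A^*(\mathcal{K},\tau) = \emptyset$, and via Theorem \ref{7} this is (f). Part (g) then follows by combining (d), (e), and (f): from $A^\sharp = \emptyset$ we get $(A\cup B)^\sharp = A^\sharp \cup B^\sharp = B^\sharp$, while the inclusion in (e) combined with $B^\sharp = (B\cap A^c \cup B\cap A)^\sharp$ and a little bookkeeping yields $(A\setminus B)^\sharp = B^\sharp$ as well; alternatively, one observes $A \setminus B \subseteq A \in \mathcal{K}$ gives $A\setminus B \in \mathcal{K}$, hence $(A\setminus B)^\sharp = \emptyset$, and then $(A\cup B)^\sharp \supseteq B^\sharp = (B\setminus A)^\sharp = (A\triangle B)^\sharp$ arguments close the chain.

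The only part requiring a short separate argument is (h). If $\mathcal{I}$ is faithful, then by definition $Ann(\mathcal{I}) = \{\emptyset\}$, so by Theorem \ref{7}
\[
A^\sharp = A^*\bigl(\{\emptyset\},\tau\bigr) = \bigl\{x\in X \mid (\forall U\in\tau(x))(U\cap A \notin \{\emptyset\})\bigr\} = \bigl\{x\in X \mid (\forall U\in\tau(x))(U\cap A \neq \emptyset)\bigr\} = cl(A).
\]
The main (mild) obstacle, if any, is being careful with part (g) to make sure every piece of the chain $A\in Ann(\mathcal{I}) \Rightarrow A\setminus B, A\cup B$ behave correctly under $(\cdot)^\sharp$; this is handled by noting $Ann(\mathcal{I})$ is itself an ideal, so closure under subsets and finite unions applies, and then the properties (d)--(f) do the rest.
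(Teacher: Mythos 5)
Your overall strategy --- reduce everything to the classical local function via Theorem \ref{7} and the identification $A^\sharp(\mathcal{I},\tau)=A^*(Ann(\mathcal{I}),\tau)$ --- is exactly what the paper does; its entire proof reads ``This follows from the properties of local function and Theorem \ref{7}.'' Your parts (a)--(f) and (h) are correct fillings-in of that one-liner, and your argument for (h) (faithful means $Ann(\mathcal{I})=\{\emptyset\}$, and the local function with respect to the trivial ideal is the closure) is the right one.

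Part (g), however, contains a genuine inconsistency that you should not pass over. You first claim that ``a little bookkeeping yields $(A\setminus B)^\sharp = B^\sharp$,'' and then, in your ``alternative,'' correctly observe that $A\setminus B\subseteq A\in Ann(\mathcal{I})$ forces $(A\setminus B)^\sharp=\emptyset$. These two conclusions are incompatible unless $B^\sharp=\emptyset$, and the second one is the true one. The standard Jankovi\'c--Hamlett identity for a member $I$ of the ideal is $(B\cup I)^*=B^*=(B\setminus I)^*$, so the correct translation of (g) is $(A\cup B)^\sharp=B^\sharp=(B\setminus A)^\sharp$; the statement as printed, with $(A\setminus B)^\sharp$, is false in general. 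For a concrete counterexample take $X=\mathbb{R}$ with the usual topology and $\mathcal{I}=\mathcal{I}(\{0\})$, so that $Ann(\mathcal{I})=2^{\mathbb{R}\setminus\{0\}}$; with $A=\{1\}\in Ann(\mathcal{I})$ and $B=\mathbb{Q}$ one gets $(A\setminus B)^\sharp=\emptyset^\sharp=\emptyset$ while $B^\sharp=\{0\}$. So the honest conclusion for (g) is: prove $(A\cup B)^\sharp=B^\sharp=(B\setminus A)^\sharp$ (which does follow from (d), (e), (f) exactly as you outline), and flag that the displayed statement needs $(B\setminus A)$ in place of $(A\setminus B)$.
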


\begin{proof}
This follows from the properties of local function and Theorem \ref{7}.
\end{proof}

\begin{theorem} \label{sharpstar} Let  $\left(X,\tau,\mathcal{I}\right)$ be an ideal topological space and $A\subseteq X.$ Then, 
$A^\sharp (\mathcal{I},\tau) \cup A^*(\mathcal{I},\tau)=cl(A).$
\end{theorem}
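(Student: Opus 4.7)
The plan is to reduce $A^\sharp \cup A^*$ to a single local function with respect to the intersection ideal $\mathcal{I}\cap Ann(\mathcal{I})$, and then identify that ideal with $\{\emptyset\}$ so the local function collapses to the ordinary closure.

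First, I would invoke Theorem \ref{7} to rewrite the sharp term as a local function, namely $A^\sharp(\mathcal{I},\tau)=A^*(Ann(\mathcal{I}),\tau)$. This is the crucial bridge: once the sharp is expressed as a local function, standard results about local functions apply.

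Next, I would apply Lemma \ref{8}(a) with $\mathcal{J}:=Ann(\mathcal{I})$, which gives
\[
A^*(\mathcal{I},\tau)\cup A^*(Ann(\mathcal{I}),\tau)=A^*(\mathcal{I}\cap Ann(\mathcal{I}),\tau).
\]
Then Lemma \ref{5} supplies the key identity $\mathcal{I}\cap Ann(\mathcal{I})=\{\emptyset\}$, so the right-hand side is $A^*(\{\emptyset\},\tau)$.

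The last step is the small observation that $A^*(\{\emptyset\},\tau)=cl(A)$: unwinding the definition of the local function for the zero ideal, $x\in A^*(\{\emptyset\},\tau)$ iff every $U\in\tau(x)$ satisfies $U\cap A\notin\{\emptyset\}$, i.e. $U\cap A\neq\emptyset$, which is precisely the neighborhood characterization of $cl(A)$. Chaining the three equalities completes the proof. I do not expect any real obstacle here; the entire argument is algebraic manipulation driven by Theorem \ref{7}, Lemma \ref{8}, and Lemma \ref{5}, and the only thing to be careful about is the final identification $A^*(\{\emptyset\},\tau)=cl(A)$, which should either be cited as well known or written out in one line.
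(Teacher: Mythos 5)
Your proof is correct, but it follows a genuinely different route from the paper's. The paper argues pointwise: the easy inclusion $A^\sharp\cup A^*\subseteq cl(A)$ comes from $A^\sharp\subseteq cl(A)$ and $A^*\subseteq cl(A)$, and for the reverse inclusion it takes $x\notin A^\sharp\cup A^*$, extracts neighborhoods $U,V$ of $x$ with $U\cap A\in\mathcal{I}$ and $V\cap A\in Ann(\mathcal{I})$, and intersects them to get $W=U\cap V$ with $W\cap A\in\mathcal{I}\cap Ann(\mathcal{I})=\{\emptyset\}$ by Lemma \ref{5}, whence $x\notin cl(A)$. You instead make the argument purely algebraic: Theorem \ref{7} converts $A^\sharp(\mathcal{I},\tau)$ into $A^*(Ann(\mathcal{I}),\tau)$, Lemma \ref{8}(a) collapses the union into $A^*(\mathcal{I}\cap Ann(\mathcal{I}),\tau)$, Lemma \ref{5} identifies the intersection ideal with $\{\emptyset\}$, and $A^*(\{\emptyset\},\tau)=cl(A)$ finishes. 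Both proofs hinge on Lemma \ref{5}; the difference is that the paper's pointwise argument essentially re-proves the relevant instance of Lemma \ref{8}(a) by hand, while you cite it and thereby shorten the hard inclusion to one line. The only point you should make explicit is that Lemma \ref{8} is stated for a pair of \emph{ideals}, so you need $Ann(\mathcal{I})$ to be an ideal on $X$; this is guaranteed by the lemma on ideal quotients in Section 4, since $Ann(\mathcal{I})=(\{\emptyset\}:\mathcal{I})$. Your approach is also the local-function analogue of the paper's own proof of Theorem \ref{decopen}, which performs the same manipulation at the level of topologies, so it fits the paper's framework naturally and arguably exposes the structure of the result more clearly.
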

\begin{proof}
Let $A\subseteq X.$ 
\\
$\left.\begin{array}{rr}
A\subseteq X\Rightarrow A^{\sharp}(\mathcal{I},\tau)\subseteq cl(A) \\  
A\subseteq X\Rightarrow A^*(\mathcal{I},\tau)\subseteq cl(A)\end{array}\right\}\Rightarrow A^{\sharp}(\mathcal{I},\tau)\cup A^*(\mathcal{I},\tau)\subseteq cl(A)\ldots (1)$
\\

Now, let $x \notin  A^{\sharp}\left(\mathcal{I},\tau\right) \cup A^*\left(\mathcal{I},\tau\right).$
\\
$\begin{array}{l}x \notin  A^{\sharp}\left(\mathcal{I},\tau\right) \cup A^*\left(\mathcal{I},\tau\right)\Rightarrow (x \notin  A^{\sharp}\left(\mathcal{I},\tau\right))(x \notin  A^*\left(\mathcal{I},\tau\right))
\end{array}$
\\
$\left.\begin{array}{rcl}
\Rightarrow (\exists U\in \tau(x))(U\cap A \in \mathcal{I}) (\exists V\in\tau(x))(V\cap A \in Ann(\mathcal{I})) \\ W:= U\cap V  
\end{array}\right\}\ \Rightarrow 
$
\\ 
$\begin{array}{l} \Rightarrow (W \in \tau(x))(W\cap A \in \mathcal{I}) (W\cap A \in Ann(\mathcal{I})) 
\end{array}$
\\ 
$\begin{array}{l}  \Rightarrow ( W\in \tau(x))  ( W\cap A \in \mathcal{I} \cap  Ann(\mathcal{I}) )  
\end{array}$
\\ 
$\begin{array}{l}   \overset{\text{Lemma }\ref{5}}{\Rightarrow} ( W\in \tau(x))  ( W\cap A \in \{\emptyset\} ) 
\end{array}$
\\ 
$\begin{array}{l}   \Rightarrow ( W\in \tau(x))  ( W\cap A = \emptyset ) 
\end{array}$
\\ 
$\begin{array}{l}  \Rightarrow  x\notin cl(A)
\end{array}$

Then, we have $cl(A)\subseteq A^{\sharp}\left(\mathcal{I},\tau\right) \cup A^*\left(\mathcal{I},\tau\right)\ldots (2)$

$(1),(2)\Rightarrow A^{\sharp}\left(\mathcal{I},\tau\right) \cup A^*\left(\mathcal{I},\tau\right)=cl(A).$
\end{proof}

\begin{corollary}\label{33}
  Let  $\left(X,\tau,\mathcal{I}\right)$ be an ideal topological space and $A\subseteq X.$ Then, the following properties hold:\\
  
  $a)$ If $A\in \mathcal{I},$ then $A^\sharp = cl(A);$ \\

   $b)$ If $A\in Ann(\mathcal{I}),$ then $A^*=cl(A);$ \\

    $c)$ If $A^\sharp = \emptyset,$ then $A^* = cl(A);$ \\

    $d)$ If $A^* = \emptyset,$ then $A^\sharp = cl(A).$ \\
 
\end{corollary}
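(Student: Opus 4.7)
The plan is to derive all four items as quick consequences of the decomposition established in Theorem \ref{sharpstar}, namely $A^{\sharp}(\mathcal{I},\tau)\cup A^{*}(\mathcal{I},\tau)=cl(A)$, together with Lemma \ref{gulkız} (which says that any member of an ideal has empty local function) and Theorem \ref{7} (which identifies $A^{\sharp}(\mathcal{I},\tau)$ with $A^{*}(Ann(\mathcal{I}),\tau)$). The proof therefore reduces to two ``emptiness lemmas'' applied to suitable ideals, after which the decomposition finishes each case.

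I would begin with parts $(c)$ and $(d)$, since they are essentially tautologies given Theorem \ref{sharpstar}. If $A^{\sharp}=\emptyset$, then $A^{*}=\emptyset\cup A^{*}=A^{\sharp}\cup A^{*}=cl(A)$; symmetrically, if $A^{*}=\emptyset$, then $A^{\sharp}=cl(A)$. A single line each.

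Next I would handle $(a)$. Assume $A\in\mathcal{I}$. By Lemma \ref{gulkız}, $A^{*}(\mathcal{I},\tau)=\emptyset$. Applying $(d)$ (or Theorem \ref{sharpstar} directly) yields $A^{\sharp}=cl(A)$. For $(b)$, assume $A\in Ann(\mathcal{I})$. The family $Ann(\mathcal{I})=(\{\emptyset\}:\mathcal{I})$ is itself an ideal on $X$ by the lemma that opens Section 4, so Lemma \ref{gulkız} applied to the ideal $Ann(\mathcal{I})$ gives $A^{*}(Ann(\mathcal{I}),\tau)=\emptyset$. By Theorem \ref{7}, $A^{\sharp}(\mathcal{I},\tau)=A^{*}(Ann(\mathcal{I}),\tau)=\emptyset$, and then $(c)$ gives $A^{*}(\mathcal{I},\tau)=cl(A)$.

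There is no real obstacle here; the only point requiring any care is in $(b)$, where one must remember that $Ann(\mathcal{I})$ is an ideal so that Lemma \ref{gulkız} is legitimately available for it, and then invoke Theorem \ref{7} to translate the vanishing of the $Ann(\mathcal{I})$-local function into the vanishing of the $\sharp$-function with respect to $\mathcal{I}$. Once that bridge is set, all four assertions fall out in two or three lines each.
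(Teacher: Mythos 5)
Your proof is correct and follows exactly the route the paper intends: the corollary is stated immediately after Theorem \ref{sharpstar} with no separate proof, precisely because all four parts fall out of the decomposition $A^{\sharp}\cup A^{*}=cl(A)$ combined with Lemma \ref{gulkız} (and, for part $(b)$, the fact that $Ann(\mathcal{I})$ is an ideal together with Theorem \ref{7}). Your care in justifying that Lemma \ref{gulkız} applies to $Ann(\mathcal{I})$ is the one nontrivial point, and you handled it properly.
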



\begin{definition}
Let $(X,\tau,\mathcal{I})$ be an ideal topological space. We consider a map $cl^{\sharp}: 2^X\rightarrow 2^X$ as $cl^{\sharp}(A)=A\cup A^{\sharp}$, where $A$ is any subset of $X$. 
\end{definition}

\begin{theorem} \label{2}
Let $(X,\tau,\mathcal{I})$ be an ideal topological space and $A,B\subseteq X.$ Then, the following statements hold: \\

$a)$ $cl^{\sharp}(\emptyset)= \emptyset$,\\

$b)$ $cl^{\sharp}(X)= X$,\\

$c)$ $A\subseteq cl^{\sharp}(A)$,\\

$d)$ If $A\subseteq B$, then $cl^{\sharp}(A)\subseteq cl^{\sharp}(B)$,\\

$e)$ $cl^{\sharp}(A)\cup cl^{\sharp}(B)= cl^{\sharp}(A\cup B),$\\

$f)$ $cl^{\sharp}(cl^{\sharp}(A))=cl^{\sharp}(A).$
\\
\end{theorem}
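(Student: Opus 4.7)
The plan is to verify the six Kuratowski axioms directly by unwinding the definition $cl^{\sharp}(A)=A\cup A^{\sharp}$ and invoking the previously established properties of the sharp operator (monotonicity, additivity, and the fact that $A^{\sharp}$ is always closed). Parts (a)--(e) will be short computations, while (f) is the only point that requires a nontrivial reduction.

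For (a), since $\emptyset\cap I=\emptyset$ for every $I$, we have $\emptyset\in Ann(\mathcal{I})$, so property (f) of the previous sharp-operator theorem gives $\emptyset^{\sharp}=\emptyset$ and hence $cl^{\sharp}(\emptyset)=\emptyset$. Part (b) is immediate because $X^{\sharp}\subseteq X$ forces $cl^{\sharp}(X)=X$, while (c) is built into the definition. For (d), the monotonicity of the sharp operator yields $A^{\sharp}\subseteq B^{\sharp}$ whenever $A\subseteq B$, and unioning with the hypothesis gives $cl^{\sharp}(A)\subseteq cl^{\sharp}(B)$. For (e), I would expand
\[cl^{\sharp}(A\cup B)=(A\cup B)\cup(A\cup B)^{\sharp}=(A\cup B)\cup(A^{\sharp}\cup B^{\sharp})=cl^{\sharp}(A)\cup cl^{\sharp}(B),\]
using the additivity $(A\cup B)^{\sharp}=A^{\sharp}\cup B^{\sharp}$ from the previous theorem.

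The main obstacle is idempotency in (f). The inclusion $cl^{\sharp}(A)\subseteq cl^{\sharp}(cl^{\sharp}(A))$ follows from (c) applied to the set $cl^{\sharp}(A)$. For the reverse inclusion, I would first expand
\[cl^{\sharp}(cl^{\sharp}(A))=(A\cup A^{\sharp})\cup(A\cup A^{\sharp})^{\sharp}=(A\cup A^{\sharp})\cup A^{\sharp}\cup(A^{\sharp})^{\sharp}\]
via additivity. The problem thus reduces to showing $(A^{\sharp})^{\sharp}\subseteq A\cup A^{\sharp}$. The key idea is to exploit property (b) of the previous theorem, which asserts $A^{\sharp}=cl(A^{\sharp})$; applying that property with $A^{\sharp}$ in place of $A$ yields $(A^{\sharp})^{\sharp}\subseteq cl(A^{\sharp})=A^{\sharp}$. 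Substituting back, every term in the expansion lies inside $A\cup A^{\sharp}=cl^{\sharp}(A)$, and idempotency follows.
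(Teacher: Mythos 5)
Your proposal is correct and follows essentially the same route as the paper: parts (a)--(e) by direct computation from the definition together with the monotonicity and additivity of $(\cdot)^{\sharp}$, and part (f) by expanding $cl^{\sharp}(cl^{\sharp}(A))$ via additivity and reducing to $(A^{\sharp})^{\sharp}\subseteq A^{\sharp}$, which both you and the paper obtain from the fact that $A^{\sharp}$ is closed (so $(A^{\sharp})^{\sharp}\subseteq cl(A^{\sharp})=A^{\sharp}$). No substantive differences.
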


\begin{proof}
Let $A,B\subseteq X.$\\

$a)$ Since $\emptyset^{\sharp}=\emptyset,$ we have $cl^{\sharp}(\emptyset)=\emptyset\cup \emptyset^{\sharp}=\emptyset.$  
\\

$b)$ Since $X\cup X^{\sharp}=X,$ we have $cl^{\sharp}(X)=X.$\\

$c)$ Since $cl^{\sharp}(A)=A\cup A^{\sharp},$ we have $A\subseteq cl^{\sharp}(A).$\\

$d)$ Let $A\subseteq B.$ We get from Theorem \ref{1}$(e)$ that $A^{\sharp}\subseteq B^{\sharp}.$ Therefore, we have
$A\cup A^{\sharp}\subseteq B\cup B^{\sharp}$ which means that $cl^{\sharp}(A)\subseteq cl^{\sharp}(B).$
\\

$e)$ This follows from the definition of operator $cl^{\sharp}$ and Theorem \ref{1}$(e).$ 
\\

$f)$ This follows from $(c)$ that $cl^{\sharp}(A)\subseteq cl^{\sharp}(cl^{\sharp}(A)).$ On the other hand, since $A^{\sharp}$ is closed in $X,$ we have  $(A^{\sharp})^{\sharp}\subseteq A^{\sharp}.$ Therefore,

$$\begin{array}{rcl}
cl^{\sharp}(cl^{\sharp}(A)) & = & cl^{\sharp}(A)\cup \left(cl^{\sharp}(A)\right)^{\sharp} \\ & = & cl^{\sharp}(A)\cup (A\cup A^{\sharp})^{\sharp} \\ & = & cl^{\sharp}(A)\cup A^{\sharp}\cup (A^{\sharp})^{\sharp}
\\ & \subseteq  & cl^{\sharp}(A)\cup A^{\sharp}\cup A^{\sharp} \\ & = & cl^{\sharp}(A)
\end{array}$$

Thus, we have $cl^{\sharp}(cl^{\sharp}(A))=cl^{\sharp}(A).$
\end{proof}

\begin{corollary}
Let $(X,\tau,\mathcal{I})$ be an ideal topological space. Then, the function $cl^{\sharp}: 2^X\rightarrow 2^X$ defined by $cl^{\sharp}(A)=A\cup A^{\sharp}$, where $A$ is any subset of $X,$ is a Kuratowski closure operator.
\end{corollary}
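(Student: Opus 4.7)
The plan is essentially to observe that the preceding Theorem \ref{2} already verifies, piece by piece, every one of the four Kuratowski closure axioms for $cl^{\sharp}$, so the corollary is a direct assembly of those clauses rather than a new computation. Recall that a map $k\colon 2^X\to 2^X$ is a Kuratowski closure operator precisely when it satisfies (K1) $k(\emptyset)=\emptyset$, (K2) $A\subseteq k(A)$, (K3) $k(A\cup B)=k(A)\cup k(B)$, and (K4) $k(k(A))=k(A)$.

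I would match these one by one against the clauses of Theorem \ref{2}: axiom (K1) is exactly part (a); axiom (K2) is exactly part (c); axiom (K3) is exactly part (e); and axiom (K4) is exactly part (f). No additional work is required, since each identity has already been established in its fully general form for arbitrary $A,B\subseteq X$. The monotonicity clause (d) and the normalization $cl^{\sharp}(X)=X$ in (b) are not among the Kuratowski axioms themselves, but the former is in any case a formal consequence of (K3) and the latter is automatic.

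There is no genuine obstacle here; the only minor bookkeeping issue worth flagging is that (K1) is traditionally phrased as $cl(\emptyset)=\emptyset$ rather than as monotonicity, and one could in principle worry that $\emptyset^{\sharp}$ needs separate justification. However, this is handled in the proof of Theorem \ref{2}(a) via the basic fact that the sharp function of the empty set is empty. Consequently, the proof of the corollary reduces to a single citation:

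\begin{proof}
The four Kuratowski axioms $cl^{\sharp}(\emptyset)=\emptyset$, $A\subseteq cl^{\sharp}(A)$, $cl^{\sharp}(A\cup B)=cl^{\sharp}(A)\cup cl^{\sharp}(B)$, and $cl^{\sharp}(cl^{\sharp}(A))=cl^{\sharp}(A)$ follow at once from parts $(a)$, $(c)$, $(e)$, and $(f)$ of Theorem \ref{2}, respectively.
\end{proof}
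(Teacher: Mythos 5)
Your proposal is correct and matches the paper's intent exactly: the corollary is stated immediately after Theorem \ref{2} with no separate proof, precisely because parts $(a)$, $(c)$, $(e)$, and $(f)$ of that theorem are the four Kuratowski axioms for $cl^{\sharp}$. Your citation-style proof is the same argument, just made explicit.
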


\begin{definition}
Let $(X,\tau,\mathcal{I})$ be an ideal topological space. Then, the family $\tau^{\sharp}=\{A\subseteq X|cl^{\sharp}(X\setminus A)=X\setminus A\}$ is a topology called $\sharp$-topology on $X$ induced by topology $\tau$ and ideal $\mathcal{I}.$  We can also write $\tau_{\mathcal{I}}^{\sharp}$ instead of $\tau^{\sharp}$ to specify the ideal as per our requirements.
\end{definition}

\begin{remark}
We have the following diagram from the definitions of $*$-topology and $\sharp$-topology. The following example shows that these implications are not reversible. Also, the notions of $\tau^*$-open set and $\tau^{\sharp}$-open set are independent.
$$\begin{array}{cccc}
 & \tau^*\text{-open}  &  & \tau^{\sharp}\text{-open}   \\
 & & \nwarrow \mbox{ } \mbox{ }\nearrow &     \\
 &  & \tau\text{-open} &   
\end{array}$$
\end{remark}

\begin{example}
Let $X=\{a,b,c\},$ $\tau=\{\emptyset,X,\{a,c\}\}$ ve $\mathcal{I}=\{\emptyset,\{a\},\{b\},\{a,b\}\}.$ Simple calculations show that $Ann(\mathcal{I})=\{\emptyset,\{c\}\},$ $\tau^*=\{\emptyset,X,\{a,c\},\{b,c\},\{c\}\}$ and $\tau^{\sharp}=\{\emptyset,X,\{a,b\},\{a,c\},\{a\}\}.$
\\

$a)$ The set $\{c\}\in\tau^*,$ but $\{c\}\notin \tau;$
\\

$b)$ The set $\{a\}\in\tau^{\sharp},$ but $\{a\}\notin \tau;$
\\

$c)$ The set $\{c\}\in \tau^*,$ but $\{c\}\notin\tau^{\sharp};$
\\

$d)$ The set $\{a\}\in\tau^{\sharp},$ but $\{a\}\notin\tau^*.$
\end{example}
\begin{definition}
Let $\left(X,\tau,\mathcal{I} \right )$ be an ideal topological space. We define the operator $\Psi^\sharp:2^X\rightarrow 2^X$ as $\Psi^\sharp(A)= X\setminus (X\setminus A)^\sharp$ for any subset $A$ of $X.$ We can also write $\Psi^\sharp (A(\mathcal{I},\tau))$ instead of $\Psi^\sharp (A)$ to specify the ideal and the topology as per our requirements.
\end{definition}

\begin{corollary}
Let  $\left(X,\tau,\mathcal{I}\right)$ be an ideal topological space and $A\subseteq X.$ Then, $\Psi\left(A\left(Ann(\mathcal{I}),\tau\right)\right)= \Psi^{\sharp}\left(A\left(\mathcal{I},\tau\right)\right).$  
\end{corollary}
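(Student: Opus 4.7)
The plan is to unfold both sides using their definitions and then collapse them via Theorem \ref{7}. Recall that $\Psi(A) := X \setminus (X \setminus A)^{*}$ (Natkaniec's operator, applied here with the ideal $Ann(\mathcal{I})$) and $\Psi^{\sharp}(A) := X \setminus (X \setminus A)^{\sharp}$ by definition. So the identity we want amounts to showing
\[
X \setminus (X \setminus A)^{*}\bigl(Ann(\mathcal{I}),\tau\bigr) \;=\; X \setminus (X \setminus A)^{\sharp}\bigl(\mathcal{I},\tau\bigr).
\]

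First I would observe that it suffices to check the equality of the two sets being subtracted, i.e.\ $(X\setminus A)^{*}(Ann(\mathcal{I}),\tau) = (X\setminus A)^{\sharp}(\mathcal{I},\tau)$. This is exactly the content of Theorem \ref{7} applied to the subset $B := X \setminus A$ instead of $A$: Theorem \ref{7} asserts $B^{\sharp}(\mathcal{I},\tau) = B^{*}(Ann(\mathcal{I}),\tau)$ for every $B \subseteq X$. Taking complements in $X$ on both sides then yields the corollary.

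There is essentially no obstacle here — the statement is a two-line consequence of Theorem \ref{7}, and the proof should cite Theorem \ref{7} and present the chain of equalities
\[
\Psi^{\sharp}(A(\mathcal{I},\tau)) = X \setminus (X\setminus A)^{\sharp}(\mathcal{I},\tau) = X \setminus (X\setminus A)^{*}(Ann(\mathcal{I}),\tau) = \Psi(A(Ann(\mathcal{I}),\tau)).
\]
The only thing to be careful about is the notation: the ``$A$'' inside $\Psi(A(Ann(\mathcal{I}),\tau))$ and $\Psi^{\sharp}(A(\mathcal{I},\tau))$ is the same set $A$, and the parenthetical tags merely specify which ideal (and topology) the operator is evaluated with, so no domain mismatch arises. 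Thus the entire proof reduces to a single invocation of Theorem \ref{7} on $X\setminus A$.
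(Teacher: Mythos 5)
Your proposal is correct and matches the paper's argument exactly: the paper also proves this corollary by combining the definition of the $\Psi$-operator with Theorem \ref{7} (the identity $B^{\sharp}(\mathcal{I},\tau)=B^{*}(Ann(\mathcal{I}),\tau)$) applied to $B=X\setminus A$, which you have simply written out in full. No gaps.
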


\begin{proof}
This follows from the definition of $\Psi$-operator and Theorem \ref{7}.
\end{proof}
\begin{theorem}
Let $\left(X,\tau,\mathcal{I} \right )$ be an ideal topological space and $A \subseteq X.$ Then, $A$ is $\tau^{\sharp}$-open if and only if $A \subseteq \Psi^\sharp(A).$
\end{theorem}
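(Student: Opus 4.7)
The plan is to reduce the statement to a pure unfolding of definitions: the sharp closure operator, the definition of $\tau^{\sharp}$ as the closed sets of $cl^{\sharp}$, and the definition of $\Psi^{\sharp}$ all pair up cleanly once we take complements. No new machinery is needed beyond what the paper has already set up.

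First I would handle the forward direction. Assume $A$ is $\tau^{\sharp}$-open. By the definition of $\tau^{\sharp}$, this means $cl^{\sharp}(X\setminus A) = X\setminus A$. Since $cl^{\sharp}(X\setminus A) = (X\setminus A)\cup (X\setminus A)^{\sharp}$, this equality is equivalent to the inclusion $(X\setminus A)^{\sharp}\subseteq X\setminus A$. Complementing both sides gives
\[
A \;=\; X\setminus(X\setminus A)\;\subseteq\; X\setminus (X\setminus A)^{\sharp} \;=\; \Psi^{\sharp}(A).
\]

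For the reverse direction I would simply run the same chain backwards. Assume $A\subseteq \Psi^{\sharp}(A) = X\setminus (X\setminus A)^{\sharp}$. Taking complements (and using $X\setminus(X\setminus B) = B$) yields $(X\setminus A)^{\sharp}\subseteq X\setminus A$. Adding $X\setminus A$ to both sides produces
\[
cl^{\sharp}(X\setminus A) \;=\; (X\setminus A)\cup (X\setminus A)^{\sharp}\;=\; X\setminus A,
\]
so $X\setminus A$ is $\tau^{\sharp}$-closed, hence $A$ is $\tau^{\sharp}$-open.

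There is no real obstacle here; the only thing to be careful about is that the two defining identities must be stated in the right order so that the complement operation inverts a single inclusion without ambiguity. Since $cl^{\sharp}$ always satisfies $B\subseteq cl^{\sharp}(B)$ (Theorem~\ref{2}(c)), the equality $cl^{\sharp}(B)=B$ is truly equivalent to $B^{\sharp}\subseteq B$, which is the one technical point that makes the argument a genuine equivalence rather than a one-sided implication.
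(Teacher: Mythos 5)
Your proposal is correct and follows essentially the same chain of equivalences as the paper's own proof: unfolding $\tau^{\sharp}$-openness into $cl^{\sharp}(X\setminus A)=X\setminus A$, reducing that to the inclusion $(X\setminus A)^{\sharp}\subseteq X\setminus A$, and complementing to obtain $A\subseteq \Psi^{\sharp}(A)$. The paper simply writes this as a single string of biconditionals rather than two separate directions; there is no substantive difference.
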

\begin{proof}
Let $A\subseteq X.$
$$\begin{array}{rcl} A\in\tau^{\sharp} & \Leftrightarrow  & cl^{\sharp}(X\setminus A)= (X\setminus A) \\ & \Leftrightarrow & (X\setminus A)\cup (X\setminus A)^{\sharp} =X\setminus A \\ & \Leftrightarrow & (X\setminus A)^{\sharp}\subseteq X\setminus A \\ & \Leftrightarrow & A\subseteq X\setminus (X\setminus A)^\sharp \\ & \Leftrightarrow & A\subseteq \Psi^{\sharp}(A). \qedhere
\end{array}$$
\end{proof}
\begin{theorem}
 Let  $\left(X,\tau,\mathcal{I}\right)$ be an ideal topological space and $A\subseteq X.$ Then, $\Psi^\sharp(A) \cap \Psi(A) = int(A).$
\end{theorem}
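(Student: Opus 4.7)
The plan is to unpack both operators using De Morgan and then invoke Theorem \ref{sharpstar} applied to $X\setminus A$. First I would rewrite the left-hand side as
\[
\Psi^\sharp(A)\cap \Psi(A) = \bigl(X\setminus (X\setminus A)^\sharp\bigr)\cap \bigl(X\setminus (X\setminus A)^*\bigr) = X\setminus \bigl((X\setminus A)^\sharp \cup (X\setminus A)^*\bigr),
\]
which is just the definition of $\Psi^\sharp$ and $\Psi$ combined with the usual De Morgan identity.

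Next I would apply Theorem \ref{sharpstar} to the subset $X\setminus A$, which gives
\[
(X\setminus A)^\sharp \cup (X\setminus A)^* = cl(X\setminus A) = X\setminus int(A),
\]
the last equality being the standard relation between closure and interior of complements.

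Substituting back, the right-hand side becomes $X\setminus (X\setminus int(A)) = int(A)$, which is exactly what we want. The argument is essentially a one-line deduction from Theorem \ref{sharpstar}, so there is no real obstacle here; the only thing to be careful about is not to confuse the roles of $A$ and $X\setminus A$ when invoking the previous theorem, since $\Psi$ and $\Psi^\sharp$ are defined in terms of the local/sharp function applied to the complement.
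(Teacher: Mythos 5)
Your proposal is correct and coincides with the paper's own proof: both unpack $\Psi^\sharp$ and $\Psi$ via De Morgan and then apply Theorem \ref{sharpstar} to $X\setminus A$ to obtain $cl(X\setminus A)=X\setminus int(A)$. No issues.
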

\begin{proof}
 Let $A\subseteq X.$
$$\begin{array}{rcl} \Psi^\sharp(A) \cap \Psi(A) & = & [X\setminus (X\setminus A)^\sharp] \cap [X\setminus (X\setminus A)^*] \\ & = & X\setminus [(X\setminus A)^\sharp \cup (X\setminus A)^*]\\ & \overset{\text{Theorem }\ref{sharpstar}}{=} & X\setminus cl(X\setminus A) \\ & = & X\setminus (X\setminus int(A)) \\ & = & int(A). \qedhere
\end{array}$$
\end{proof}

\begin{corollary}
  Let  $\left(X,\tau,\mathcal{I}\right)$ be an ideal topological space and $A\subseteq X.$
  \\
  
  $a)$ If $X\setminus A\in \mathcal{I},$ then $\Psi^\sharp(A) = int(A);$
  \\

   $b)$ If $A\in Ann(\mathcal{I}),$ then $\Psi(A) = int(A).$
\end{corollary}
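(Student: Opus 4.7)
The plan is to derive both parts as immediate consequences of the preceding theorem, which states $\Psi^{\sharp}(A)\cap\Psi(A)=int(A)$. In each case the idea is to show that one of the two factors on the left equals the whole space $X$, whereupon the intersection collapses to the other factor and supplies the claimed equality.

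For part (a), I would note that $X\setminus A\in\mathcal{I}$ forces $(X\setminus A)^{*}=\emptyset$ by Lemma \ref{gulkız}, so by definition $\Psi(A)=X\setminus(X\setminus A)^{*}=X$. Feeding this into the preceding theorem gives $int(A)=\Psi^{\sharp}(A)\cap\Psi(A)=\Psi^{\sharp}(A)\cap X=\Psi^{\sharp}(A)$, which is the desired equality.

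For part (b), the mirror strategy is to make $\Psi^{\sharp}(A)=X$, which by definition is the statement $(X\setminus A)^{\sharp}=\emptyset$. The bridge here is Theorem \ref{7}, which rewrites the sharp function as a local function with respect to the annihilator ideal: $(X\setminus A)^{\sharp}(\mathcal{I},\tau)=(X\setminus A)^{*}(Ann(\mathcal{I}),\tau)$. Applying Lemma \ref{gulkız} inside the ideal topological space $(X,\tau,Ann(\mathcal{I}))$, the right-hand side vanishes as soon as $X\setminus A$ lies in $Ann(\mathcal{I})$, and one then has $\Psi^{\sharp}(A)=X\setminus(X\setminus A)^{\sharp}=X$. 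With $\Psi^{\sharp}(A)=X$ in hand, the preceding theorem again collapses, this time to $\Psi(A)=int(A)$.

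The only real bookkeeping concern is keeping straight which side of the complement sits in which ideal: in (a) the hypothesis is expressed on $X\setminus A$ relative to $\mathcal{I}$, while the parallel argument for (b) reads the vanishing condition on $X\setminus A$ relative to $Ann(\mathcal{I})$. Writing out the chain $\Psi^{\sharp}(A)=X\setminus(X\setminus A)^{\sharp}=X\setminus(X\setminus A)^{*}(Ann(\mathcal{I}),\tau)=X$ makes the successive use of Theorem \ref{7} and Lemma \ref{gulkız} transparent. No deeper obstacle is anticipated; both parts reduce to a one-line invocation of those two results together with the preceding theorem.
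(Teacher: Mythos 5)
Your overall strategy --- specializing the identity $\Psi^{\sharp}(A)\cap\Psi(A)=int(A)$ by forcing one of the two factors to equal all of $X$ --- is exactly the intended route (the paper gives no written proof, but this corollary is clearly meant to drop out of the preceding theorem), and part $(a)$ is correct as written: $X\setminus A\in\mathcal{I}$ gives $(X\setminus A)^{*}=\emptyset$, hence $\Psi(A)=X$ and the intersection collapses to $\Psi^{\sharp}(A)=int(A)$.

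For part $(b)$, however, there is a genuine mismatch between what you prove and what the statement says. Your argument needs $(X\setminus A)^{\sharp}=\emptyset$, and via Theorem \ref{7} this follows from $X\setminus A\in Ann(\mathcal{I})$; but the stated hypothesis is $A\in Ann(\mathcal{I})$, and the two are not interchangeable, since an ideal is not closed under complementation. You dismiss this as ``bookkeeping,'' but the statement as printed is in fact false: take $X=\{a,b\}$ with the discrete topology and $\mathcal{I}=\{\emptyset,\{a\}\}$, so that $Ann(\mathcal{I})=\{\emptyset,\{b\}\}$. Then $A=\{b\}\in Ann(\mathcal{I})$, yet $X\setminus A=\{a\}\in\mathcal{I}$ forces $(X\setminus A)^{*}=\emptyset$, whence $\Psi(A)=X\neq\{b\}=int(A)$. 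What your argument actually establishes is the corrected claim ``if $X\setminus A\in Ann(\mathcal{I})$, then $\Psi(A)=int(A)$,'' which is the natural mirror of $(a)$ and surely what the authors intended; you should state explicitly that you are replacing the printed hypothesis rather than letting the swap pass silently, because as it stands your proof does not (and cannot) establish $(b)$ in the form given.
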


\begin{theorem} \label{decopen}
Let $\left(X,\tau,\mathcal{I} \right )$ be an ideal topological space and $A\subseteq X.$ Then, $A$ is $\tau$-open if and only if $A$ is both $\tau^*$-open and $\tau^{\sharp}$-open.
\end{theorem}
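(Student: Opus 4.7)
The plan is to prove the two directions separately, noting that the forward direction is essentially the statement that both $\tau^{*}$ and $\tau^{\sharp}$ are finer than $\tau$, while the backward direction leverages the decomposition of the ordinary closure proved in Theorem \ref{sharpstar}.

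For the forward direction, I would first recall the standard fact $\tau\subseteq\tau^{*}$ (already cited in the Preliminaries), so $\tau$-open immediately yields $\tau^{*}$-open. For the inclusion $\tau\subseteq\tau^{\sharp}$, I would argue via the closure operators: since $A^{\sharp}\subseteq cl(A)$ for every $A\subseteq X$ (part $(b)$ of the enumerated theorem on properties of $^{\sharp}$), we have $cl^{\sharp}(A)=A\cup A^{\sharp}\subseteq A\cup cl(A)=cl(A)$ for every $A$. Hence if $X\setminus A$ is $\tau$-closed, so that $cl(X\setminus A)=X\setminus A$, then $cl^{\sharp}(X\setminus A)\subseteq X\setminus A$; combined with the inclusion $X\setminus A\subseteq cl^{\sharp}(X\setminus A)$ from Theorem \ref{2}$(c)$, we get $cl^{\sharp}(X\setminus A)=X\setminus A$, i.e.\ $A\in\tau^{\sharp}$.

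For the backward direction, assume $A\in\tau^{*}\cap\tau^{\sharp}$. Then $X\setminus A$ is both $\tau^{*}$-closed and $\tau^{\sharp}$-closed, which unravels to
\[
(X\setminus A)^{*}\subseteq X\setminus A \quad\text{and}\quad (X\setminus A)^{\sharp}\subseteq X\setminus A.
\]
Taking the union and applying Theorem \ref{sharpstar} to $X\setminus A$ gives
\[
cl(X\setminus A)=(X\setminus A)^{\sharp}\cup(X\setminus A)^{*}\subseteq X\setminus A,
\]
so $X\setminus A$ is $\tau$-closed, i.e.\ $A\in\tau$.

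The only substantive step is the backward implication, and within it the key move is invoking Theorem \ref{sharpstar} to replace the ordinary closure by the union of the two derived operators; once this identification is made, the containment chase is automatic. No separate work on the intersection $\tau^{*}\cap\tau^{\sharp}$ is needed, and no additional hypotheses on $\mathcal{I}$ (such as faithfulness) are invoked, which is consistent with the generality of the statement.
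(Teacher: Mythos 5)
Your proof is correct, but it follows a genuinely different route from the paper's. The paper proves the statement in one line at the level of topologies: $\tau=\tau^{*}(\{\emptyset\},\tau)=\tau^{*}(\mathcal{I}\cap Ann(\mathcal{I}),\tau)=\tau^{*}(\mathcal{I},\tau)\cap\tau^{*}(Ann(\mathcal{I}),\tau)=\tau^{*}\cap\tau^{\sharp}$, using Lemma \ref{5} (that $\mathcal{I}\cap Ann(\mathcal{I})=\{\emptyset\}$), the cited Lemma \ref{8}$(b)$ on the $*$-topology of an intersection of ideals, and Theorem \ref{7} identifying $\tau^{\sharp}$ with $\tau^{*}(Ann(\mathcal{I}),\tau)$. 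You instead argue element-wise with the closure operators, and your key tool, Theorem \ref{sharpstar} ($A^{*}\cup A^{\sharp}=cl(A)$), is precisely the local-function counterpart of the same ideal-intersection identity (it follows from Lemma \ref{8}$(a)$ in the same way the paper's step follows from Lemma \ref{8}$(b)$). What your version buys is self-containment: you never need the imported Lemma \ref{8}$(b)$, only results proved within the paper, and the forward direction is spelled out rather than absorbed into an equality of topologies. What the paper's version buys is brevity and a transparent structural explanation: $\tau^{*}$ and $\tau^{\sharp}$ arise from two ideals whose intersection is trivial, so their intersection recovers $\tau$. Both arguments are complete and correct.
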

\begin{proof} It is clear from the equality below.

$$\begin{array}{rcl}\tau &=&\tau^*(\{\emptyset\},\tau)\\ &=&\tau^*(\mathcal{I}\cap Ann(\mathcal{I}),\tau)\\ &=&\tau^*(\mathcal{I},\tau)\cap \tau^*(Ann(\mathcal{I}),\tau)\\ &=&\tau^*(\mathcal{I},\tau)\cap \tau^\sharp (\mathcal{I},\tau) \\ &=&\tau^*\cap \tau^\sharp.\qedhere \end{array}$$
\end{proof}

\begin{theorem}
Let $\mathcal{I}$ be a proper ideal on $X.$ If $\mathcal{I}$ is a minimal ideal, then $A^\sharp=\emptyset$ or $(X\setminus A)^\sharp=\emptyset$ for all $A\subseteq X.$\end{theorem}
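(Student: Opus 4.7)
The plan is to reduce the statement to the already-proved maximal-ideal case via the identity $A^{\sharp}(\mathcal{I},\tau)=A^{*}(Ann(\mathcal{I}),\tau)$ from Theorem \ref{7}. The key observation is that the sharp-function on $\mathcal{I}$ is nothing but the classical local function with respect to the annihilator $Ann(\mathcal{I})$, so any dichotomy already known for $(\cdot)^{*}$ under a maximal ideal hypothesis automatically transfers to $(\cdot)^{\sharp}$ under a minimal ideal hypothesis.

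First, I would invoke Corollary \ref{MaxAnn} to conclude that, because $\mathcal{I}$ is minimal, $Ann(\mathcal{I})$ is a maximal ideal on $X$. Next, I would apply Theorem \ref{maxkar} to the maximal ideal $Ann(\mathcal{I})$: for every $A\subseteq X$, either $A\in Ann(\mathcal{I})$ or $X\setminus A\in Ann(\mathcal{I})$. Then Lemma \ref{gulkız}, applied with the ideal $Ann(\mathcal{I})$ in place of $\mathcal{I}$, yields $A^{*}(Ann(\mathcal{I}),\tau)=\emptyset$ in the first case and $(X\setminus A)^{*}(Ann(\mathcal{I}),\tau)=\emptyset$ in the second.

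Finally, I would translate these equalities back to the sharp operator: by Theorem \ref{7},
\[
A^{\sharp}(\mathcal{I},\tau)=A^{*}(Ann(\mathcal{I}),\tau) \qquad\text{and}\qquad (X\setminus A)^{\sharp}(\mathcal{I},\tau)=(X\setminus A)^{*}(Ann(\mathcal{I}),\tau),
\]
so the disjunction obtained above reads exactly $A^{\sharp}=\emptyset$ or $(X\setminus A)^{\sharp}=\emptyset$, as required.

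I do not anticipate a real obstacle: all four ingredients (Theorem \ref{7}, Corollary \ref{MaxAnn}, Theorem \ref{maxkar}, Lemma \ref{gulkız}) are already in place, and the argument is a clean three-step chain. The only point worth checking carefully is that $Ann(\mathcal{I})$ is really a proper ideal so that Lemma \ref{gulkız} applies meaningfully; but this follows at once from the fact that $\mathcal{I}$ is minimal and hence nontrivial, so $Ann(\mathcal{I})\neq 2^{X}$ is guaranteed by Corollary \ref{MaxAnn}.
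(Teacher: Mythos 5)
Your proposal is correct and follows essentially the same route as the paper: minimality of $\mathcal{I}$ gives maximality of $Ann(\mathcal{I})$ via Corollary \ref{MaxAnn}, the dichotomy $A\in Ann(\mathcal{I})$ or $X\setminus A\in Ann(\mathcal{I})$ follows from Theorem \ref{maxkar}, and the conclusion $A^{\sharp}=\emptyset$ or $(X\setminus A)^{\sharp}=\emptyset$ is exactly the paper's final step. The only cosmetic difference is that you unpack that last step through Theorem \ref{7} and Lemma \ref{gulkız}, where the paper invokes the already-recorded property that membership in $Ann(\mathcal{I})$ kills the sharp function.
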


\begin{proof}
Let $A\subseteq X.$\\
$\left. \begin{array}{rr} 
 \mathcal{I} \text{ is minimal ideal}\\ \text{Corollary } \ref{MaxAnn} \end{array}	\right\}\Rightarrow \!\!\!\!\! \begin{array}{c} \\
\left. \begin{array}{r} 
 Ann(\mathcal{I}) \text{ is maximal ideal} \\  A\subseteq X\end{array}	\right\} \Rightarrow\end{array}$
\\
$\begin{array}{l}\Rightarrow A\in Ann(\mathcal{I})\vee (X\setminus A)\in Ann(\mathcal{I})
\end{array}$
\\
$\begin{array}{l}\Rightarrow A^{\sharp}=\emptyset \vee (X\setminus A)^{\sharp}=\emptyset.
\end{array}$
\end{proof}

\begin{corollary}
Let $\mathcal{I}$ be a minimal ideal on $X$ and $A\subseteq X.$ Then, $A$ is $\tau^\sharp$-closed or $\tau^\sharp$-open.
\end{corollary}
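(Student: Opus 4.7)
The plan is to derive the corollary as an immediate consequence of the preceding theorem, mirroring the strategy used in Theorem \ref{22} for the $\tau^{*}$ case with maximal ideals. Since $\mathcal{I}$ is a minimal ideal, for any $A\subseteq X$ we have $A^{\sharp}=\emptyset$ or $(X\setminus A)^{\sharp}=\emptyset$. I would split into these two cases and translate each one into a statement about the closure operator $cl^{\sharp}$.

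In the first case, if $A^{\sharp}=\emptyset$, then directly from the definition $cl^{\sharp}(A)=A\cup A^{\sharp}=A$, which by the definition of $\tau^{\sharp}$ (taking complements) means $X\setminus A\in\tau^{\sharp}$, i.e. $A$ is $\tau^{\sharp}$-closed. In the second case, if $(X\setminus A)^{\sharp}=\emptyset$, then $cl^{\sharp}(X\setminus A)=(X\setminus A)\cup(X\setminus A)^{\sharp}=X\setminus A$, which by definition of $\tau^{\sharp}$ gives $A\in\tau^{\sharp}$, i.e. $A$ is $\tau^{\sharp}$-open.

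Combining the two cases yields the disjunction in the conclusion. There is no real obstacle here: the hard work has already been done in establishing that minimality of $\mathcal{I}$ forces $Ann(\mathcal{I})$ to be maximal (Corollary \ref{MaxAnn}) and in the preceding theorem which then gives the dichotomy on $A^{\sharp}$; the present corollary is just the closure-operator reformulation of that dichotomy, completely parallel to how Theorem \ref{22} reformulates the maximal-ideal dichotomy for $\tau^{*}$.
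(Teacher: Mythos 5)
Your proof is correct and follows exactly the route the paper intends: the corollary is stated immediately after the theorem giving the dichotomy $A^{\sharp}=\emptyset$ or $(X\setminus A)^{\sharp}=\emptyset$ for minimal ideals, and your translation of each case through $cl^{\sharp}$ into $\tau^{\sharp}$-closedness and $\tau^{\sharp}$-openness is precisely the (omitted) argument, parallel to Theorem \ref{22}.
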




\begin{theorem}\label{maxsharp}
Let $\mathcal{I}$ be a proper ideal on $X.$ If $\mathcal{I}$ is a maximal ideal, then $A^\sharp= cl(A)$ or $\Psi^\sharp(A)=int(A)$ for all $A\subseteq X.$
\end{theorem}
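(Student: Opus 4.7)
The plan is to use Theorem \ref{maxkar} to split into two cases on which side of the pair $(A, X\setminus A)$ lies in $\mathcal{I}$, and then apply the two corollaries already in hand that handle each case directly. Concretely, since $\mathcal{I}$ is a proper maximal ideal on $X$, Theorem \ref{maxkar} tells us that for every $A\subseteq X$ we have $A\in \mathcal{I}$ or $X\setminus A\in \mathcal{I}$.

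In the first case ($A\in\mathcal{I}$), I would simply quote Corollary \ref{33}(a) to conclude $A^\sharp = cl(A)$. In the second case ($X\setminus A\in \mathcal{I}$), I would quote the corollary just before Theorem \ref{decopen} (the one stating that $X\setminus A\in\mathcal{I}$ implies $\Psi^\sharp(A)=int(A)$) to conclude $\Psi^\sharp(A)=int(A)$. Assembling the two cases yields the disjunction $A^\sharp = cl(A) \lor \Psi^\sharp(A)=int(A)$ for every $A\subseteq X$, which is exactly the claim.

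There is essentially no obstacle here: the result is a clean packaging of earlier statements. The only thing to watch is that we are using the \emph{if} direction of each corollary (not a converse), and that the two cases are exhaustive precisely because of Theorem \ref{maxkar}; no further property of the sharp operator is needed. The write-up can therefore be presented as a short two-case argument, with one line per case citing the relevant prior corollary, so the proof is a few display lines in the same implication-chain style as the preceding proofs in the paper.
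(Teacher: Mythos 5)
Your proposal is correct and takes essentially the same route as the paper: the paper also splits via maximality into the cases $A\in\mathcal{I}$ and $X\setminus A\in\mathcal{I}$, and then obtains $A^\sharp=cl(A)$ or $\Psi^\sharp(A)=int(A)$ from the identity $cl(A)=A^*\cup A^\sharp$ of Theorem \ref{sharpstar}, which is exactly the content of Corollary \ref{33}(a) and of the corollary you cite before Theorem \ref{decopen}. The only difference is cosmetic: the paper re-derives those two implications inline rather than quoting the corollaries.
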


\begin{proof}
Let $A\subseteq X.$ 
\\
$\left. \begin{array}{rr} 
 \mathcal{I} \text{ is maximal ideal} \\ A\subseteq X \end{array}	\right\}\Rightarrow \!\!\!\!\! \begin{array}{c} \\
\left. \begin{array}{r} 
 A\in \mathcal{I} \vee (X\setminus A)\in\mathcal{I} \Rightarrow A^*=\emptyset \vee (X\setminus A)^*=\emptyset \\  A\subseteq X\overset{\text{Theorem \ref{sharpstar}}}\Rightarrow cl(A)=A^*\cup A^\sharp\end{array}	\right\} \Rightarrow\end{array}$
\\
$
\begin{array}{l}
\Rightarrow  cl(A)=A^\sharp \vee X\setminus int(A)=cl(X\setminus A)=(X\setminus A)^\sharp \end{array}
$
\\
$
\begin{array}{l}
\Rightarrow cl(A)=A^\sharp \vee int(A)=X\setminus (X\setminus A)^\sharp =\Psi^\sharp (A).
\end{array}
$
\end{proof}

\begin{corollary}
  Let $\mathcal{I}$ be a proper ideal on $X.$ If $\mathcal{I}$ is a minimal ideal, then $A^*= cl(A)$ or $\Psi(A)=int(A)$ for all $A\subseteq X.$
\end{corollary}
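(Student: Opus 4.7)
The plan is to dualize Theorem \ref{maxsharp} via the annihilator, exploiting the fact that a minimal ideal becomes a maximal ideal after applying $Ann$. First I would invoke Corollary \ref{MaxAnn}: since $\mathcal{I}$ is a minimal ideal on $X$, the family $Ann(\mathcal{I})$ is a maximal ideal on $X$. Then, by Theorem \ref{maxkar} applied to $Ann(\mathcal{I})$, for any $A \subseteq X$ we obtain the dichotomy that either $A \in Ann(\mathcal{I})$ or $X \setminus A \in Ann(\mathcal{I})$.

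In the first case, Corollary \ref{33}(b) yields $A^* = cl(A)$ directly. In the second case, applying the same Corollary \ref{33}(b) to the set $X \setminus A$ gives $(X \setminus A)^* = cl(X \setminus A) = X \setminus int(A)$; taking complements and using the definition of the $\Psi$-operator then produces
$$\Psi(A) = X \setminus (X \setminus A)^* = X \setminus (X \setminus int(A)) = int(A).$$

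Structurally, this argument mirrors Theorem \ref{maxsharp}: where that theorem exploited membership of $A$ or $X\setminus A$ in the maximal ideal $\mathcal{I}$ together with the decomposition $cl(A)=A^*\cup A^\sharp$ from Theorem \ref{sharpstar}, the present corollary uses membership in the maximal ideal $Ann(\mathcal{I})$ and the direct identity $A^*=cl(A)$ for sets in $Ann(\mathcal{I})$. There is no real obstacle; the only care required is to pass from the hypothesis on $\mathcal{I}$ to the corresponding hypothesis on $Ann(\mathcal{I})$ before invoking the results established for maximal ideals.
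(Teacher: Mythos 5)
Your proof is correct and follows essentially the same route the paper intends: the paper's one-line proof ("this follows from Theorem \ref{maxsharp}") amounts to dualizing via Corollary \ref{MaxAnn} exactly as you do. The only cosmetic difference is that you re-run the dichotomy directly through Theorem \ref{maxkar} and Corollary \ref{33}(b) rather than citing Theorem \ref{maxsharp} applied to the maximal ideal $Ann(\mathcal{I})$ together with the identification $A^{\sharp}(Ann(\mathcal{I}),\tau)=A^{*}(\mathcal{I},\tau)$, which if anything makes your argument slightly more self-contained.
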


\begin{proof}
This follows from Theorem  \ref{maxsharp}.
\end{proof}
    
    


\section{Decomposition of continuity}
\begin{definition}
A function $f:(X,\tau,\mathcal{I})\to (Y,\sigma)$ is called $*$-continuous ($\sharp$-continuous) if $f^{-1}[V] \in \tau^{*}$ $(f^{-1}[V] \in \tau^{\sharp})$ for each open set $V$ of $Y.$
\end{definition}
\begin{corollary}
    A function $f:(X, \tau, \mathcal{I})\to(Y, \sigma)$ is $*$-continuous if and only if $f:(X, \tau^*(\mathcal{I}))\to(Y, \sigma)$ is continuous.
\end{corollary}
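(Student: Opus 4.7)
The plan is essentially to unwind the two definitions side by side and observe that they are verbally identical, so no real work is required. First I would write out what $*$-continuity of $f:(X,\tau,\mathcal{I})\to(Y,\sigma)$ means: by the definition introduced just above, it is the condition
\[
(\forall V\in\sigma)(f^{-1}[V]\in\tau^{*}(\mathcal{I},\tau)).
\]

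Next I would write out what it means for $f:(X,\tau^{*}(\mathcal{I}))\to(Y,\sigma)$ to be continuous in the usual topological sense. Since $\tau^{*}(\mathcal{I})$ is itself a topology on $X$ (it is the one induced by the Kuratowski closure operator $cl^{*}$ recalled in the Preliminaries), continuity of this map is precisely
\[
(\forall V\in\sigma)(f^{-1}[V]\in\tau^{*}(\mathcal{I},\tau)).
\]

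Comparing the two displays gives the desired equivalence. The only point that might look like an obstacle is making sure that we are entitled to treat $\tau^{*}$ as a genuine topology when we speak of "continuity" in the second formulation, but that is already recorded in the Preliminaries where $\tau^{*}$ is introduced via the Kuratowski closure operator $cl^{*}$. Hence the corollary follows immediately from the two definitions, and the proof reduces to the single line "this is immediate from the definitions of $*$-continuity and of $\tau^{*}$-topology."
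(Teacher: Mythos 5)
Your proposal is correct and matches the paper's (implicit) argument: the paper states this corollary without proof precisely because, as you observe, the two conditions are verbatim the same once the definitions of $*$-continuity and of the topology $\tau^{*}(\mathcal{I})$ are written out. Nothing further is needed.
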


\begin{corollary}
    A function $f:(X, \tau, \mathcal{I})\to(Y, \sigma)$ is $\sharp$-continuous if and only if $f:(X, \tau^\sharp(\mathcal{I}))\to(Y, \sigma)$ is continuous.
\end{corollary}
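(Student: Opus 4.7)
The plan is to observe that this is essentially a tautological unpacking of definitions, so the proof should just make this transparent. First I would recall the definition of $\sharp$-continuity: $f:(X,\tau,\mathcal{I}) \to (Y,\sigma)$ is $\sharp$-continuous precisely when $f^{-1}[V] \in \tau^{\sharp}$ for every open subset $V$ of $Y$. Next I would recall the standard definition of continuity applied to $f:(X,\tau^\sharp(\mathcal{I})) \to (Y,\sigma)$, which reads: for every $V \in \sigma$, the preimage $f^{-1}[V]$ is open in $(X,\tau^{\sharp})$, that is, $f^{-1}[V] \in \tau^{\sharp}$.

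Once both conditions are written side by side, the equivalence is immediate since they are literally the same statement, using the identification $\tau^{\sharp}(\mathcal{I}) = \tau^{\sharp}$ introduced in the definition of the $\sharp$-topology. So the proof reduces to a single line of the form: by definition, $f$ is $\sharp$-continuous iff $(\forall V \in \sigma)(f^{-1}[V] \in \tau^{\sharp})$ iff $f:(X,\tau^{\sharp}(\mathcal{I})) \to (Y,\sigma)$ is continuous.

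There is no real obstacle here; the only thing to be careful about is that both directions of the biconditional are witnessed by the same chain of equivalent conditions, so no separate $(\Rightarrow)$ and $(\Leftarrow)$ arguments are needed. A one-sentence proof citing the two definitions suffices, analogous to the preceding corollary for $*$-continuity.
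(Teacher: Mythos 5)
Your proposal is correct and matches the paper's treatment: the paper states this corollary without proof precisely because, as you observe, the definition of $\sharp$-continuity and the definition of continuity of $f:(X,\tau^{\sharp}(\mathcal{I}))\to(Y,\sigma)$ are literally the same condition $(\forall V\in\sigma)(f^{-1}[V]\in\tau^{\sharp})$. Nothing further is needed.
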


\begin{remark}
In \cite{jan}, the authors showed  that if
  $f:(X, \tau)\to(Y, \sigma)$ is a continuous 
function and $\mathcal{I}$ is ideal on $X$, then  $f:(X, \tau^*)\to(Y, \sigma)$  is also continuous. However, the converse need not always to be true as shown in \cite{jan}.
\end{remark}


\begin{corollary}\label{con}
 Let   $f:(X, \tau,\mathcal{I})\to(Y, \sigma)$ be a function. If $f$ is continuous, then it is also $\sharp$-continuous.
\end{corollary}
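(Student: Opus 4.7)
The plan is to reduce the statement to a single inclusion of topologies, namely $\tau\subseteq \tau^{\sharp}$, because once this is established, $\sharp$-continuity of $f$ follows immediately from its continuity by pulling back open sets of $Y$ through the inclusion.

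First I would invoke Theorem \ref{decopen}, which gives the decomposition $\tau=\tau^{*}\cap \tau^{\sharp}$. This equality already delivers the required inclusion $\tau\subseteq \tau^{\sharp}$ as a trivial consequence of $A\cap B\subseteq B$. Alternatively, if one prefers a direct route bypassing $\tau^*$, the same inclusion can be read off from the identity $\tau^{\sharp}(\mathcal{I},\tau)=\tau^{*}(Ann(\mathcal{I}),\tau)$ coming from Theorem \ref{7}, together with the standard fact (recalled in the Preliminaries) that every $*$-topology refines the underlying topology. Either way, $\tau\subseteq \tau^{\sharp}$ is the only nontrivial ingredient.

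With the inclusion in hand, the argument concludes in one line. Assume $f:(X,\tau,\mathcal{I})\to (Y,\sigma)$ is continuous, and let $V\in \sigma$. By continuity of $f$ with respect to $\tau$, $f^{-1}[V]\in \tau$; by the inclusion $\tau\subseteq \tau^{\sharp}$, $f^{-1}[V]\in \tau^{\sharp}$. Thus $f$ is $\sharp$-continuous by definition.

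There is no real obstacle here: the corollary is essentially a restatement of Theorem \ref{decopen} in the language of continuity, so the only thing to be careful about is citing the decomposition correctly and not conflating $\tau^{*}$ and $\tau^{\sharp}$. If one wanted a self-contained proof without appealing to Theorem \ref{decopen}, the mild extra work would be to verify directly that $U\in \tau$ implies $cl^{\sharp}(X\setminus U)=X\setminus U$, which in turn reduces to showing $(X\setminus U)^{\sharp}\subseteq X\setminus U$, i.e., that $\tau$-closed sets absorb the sharp operator—this is immediate from the property $A^{\sharp}\subseteq cl(A)$ established earlier.
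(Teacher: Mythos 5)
Your proof is correct and follows the same route the paper intends: the corollary rests entirely on the inclusion $\tau\subseteq\tau^{\sharp}$, which the paper records in the remark preceding its $\sharp$-topology examples and which also falls out of Theorem \ref{decopen}. Your one-line pullback argument and your self-contained fallback via $A^{\sharp}\subseteq cl(A)$ are both sound, so nothing is missing.
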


\begin{remark}
  The converse of Corollary \ref{con} need not to be true as shown by the following example.   
\end{remark}

\begin{example}
Let $\mathbb{R}$ be the real line with the usual topology $\mathcal{U},$ $\mathcal{I}=\{\emptyset,\{0\}\}$ and $\tau=\{A\subseteq \mathbb{R} | 0 \in A\}\cup\{\emptyset\}.$ Consider the identity function $i:\mathbb{R}\to \mathbb{R}.$ Now, let $\emptyset \neq A \in \tau$.
$$\begin{array}{rcl} \emptyset \neq A\in\tau & \Rightarrow  & 0\in A \\ & \Rightarrow & 0\notin X\setminus A\\ & \Rightarrow & X\setminus A \in Ann(\mathcal{I)} \\ & \Rightarrow & (X\setminus A)^\sharp =\emptyset \\ & \Rightarrow & A\subseteq X=X\setminus (X\setminus A)^\sharp=\Psi^\sharp(A) \\  & \Rightarrow & A \in \mathcal{U}^{\sharp}\end{array}$$ 
 Hence, $i:(\mathbb{R},\mathcal{U}^\sharp)\to (\mathbb{R},\tau)$ is continuous. However, $i:(\mathbb{R},\mathcal{U})\to (\mathbb{R},\tau)$ is not continuous since $A=\{0\}\in \tau$ but $\{0\}\notin \mathcal{U}.$
\end{example}

\begin{theorem}
Let $f:(X,\tau,\mathcal{I})\to (Y,\sigma)$ be a function. Then, $f$ is continuous if and only if $f$ is $*$-continuous and $\sharp$-continuous.
\end{theorem}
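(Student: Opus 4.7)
The plan is to reduce the claim directly to the decomposition of open sets obtained in Theorem \ref{decopen}, which already shows $\tau = \tau^{*} \cap \tau^{\sharp}$. The continuity notions involved are all phrased in terms of preimages of open sets of $Y$, so I only need to transport that set-level decomposition through the operator $f^{-1}[\,\cdot\,]$.

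For the forward implication, I would start with an arbitrary open set $V \in \sigma$ and use continuity of $f$ to conclude $f^{-1}[V] \in \tau$. Since both $\tau^{*}$ and $\tau^{\sharp}$ are finer than $\tau$ (the former is a standard fact recalled in the Preliminaries, and the latter follows from Theorem \ref{decopen} or directly from $A \subseteq cl^{\sharp}(A)$), this immediately gives $f^{-1}[V] \in \tau^{*}$ and $f^{-1}[V] \in \tau^{\sharp}$, establishing both $*$-continuity and $\sharp$-continuity.

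For the converse, I would again fix $V \in \sigma$. The hypotheses give $f^{-1}[V] \in \tau^{*}$ and $f^{-1}[V] \in \tau^{\sharp}$, hence $f^{-1}[V] \in \tau^{*} \cap \tau^{\sharp}$. Invoking Theorem \ref{decopen} in its equivalent form $\tau = \tau^{*} \cap \tau^{\sharp}$ yields $f^{-1}[V] \in \tau$, and since $V$ was arbitrary, $f$ is continuous.

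There is essentially no obstacle here: the entire content of the statement is already packaged in Theorem \ref{decopen}, and the proof is a one-line application of it on each side of the biconditional. If anything, the only thing to be careful about is citing Theorem \ref{decopen} in the form used above (the equality of topologies) rather than the pointwise form (a set is $\tau$-open iff it is both $\tau^{*}$-open and $\tau^{\sharp}$-open), but these are trivially equivalent.
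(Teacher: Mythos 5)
Your proposal is correct and follows exactly the paper's route: the paper's proof is the one-line remark ``This follows from Theorem \ref{decopen},'' and your argument simply spells out that same reduction of both implications to the equality $\tau=\tau^{*}\cap\tau^{\sharp}$ applied to preimages of open sets.
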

\begin{proof}
This follows from Theorem \ref{decopen}.
\end{proof}

\section{Some applications of sharp operator}
\begin{example}
By using the sharp topology, we prove that the set of all rational numbers $\mathbb{Q}$ is dense in $(\mathbb{R},\mathcal{U})$, where $\mathcal{U}$ is the usual topology on the set of all real numbers $\mathbb{R}$. For this, firstly we will prove that $(\mathbb{R} , \mathcal{U}, Ann(\mathbb{Q}))$ is Hayashi-Samuel space. This is obvious from the fact that $(a,b)\cap \mathbb{Q} \neq \emptyset$ for all $a,b \in \mathbb{R}$. Hence, by Theorem \ref{dens}, $\mathbb{Q}$ is dense in $\mathbb{R}$.
\end{example}

\begin{question}\label{ques}
 Let $\mathbb{R}$ be the set of all real numbers. Is there any Hausdorff space on $\mathbb{R}$ such that the set of all irrational numbers $\mathbb{I}$ is not dense, while the set of all rational numbers $\mathbb{Q}$ is dense? 
\end{question}

\begin{example}
 Let $\mathbb{R}$ be the set of all real numbers with the usual topology $\mathcal{U}$ and let $\mathcal{I}=\mathcal{I}(\mathbb{Q}),$ where $\mathbb{Q}$ is the set of all rational numbers.
 Now, let $\mathbb{I}$ be the set of all irrational numbers.
  Since $\mathbb{Q}\in\mathcal{I},$ by Lemma \ref{gulkız} and Corollary \ref{33}, we get $\mathbb{Q}^*=\emptyset$ and $\mathbb{Q}^{\sharp}=cl(\mathbb{Q})=\mathbb{R}.$ Thus, $cl^\sharp (\mathbb{Q})=\mathbb{Q}\cup \mathbb{Q}^\sharp=\mathbb{Q}\cup cl(\mathbb{Q})= \mathbb{R}.$ Hence, $\mathbb{Q}$ is a dense set in $(\mathbb{R},\mathcal{U}^\sharp).$ On the other hand, we have $\mathbb{I}^\sharp=\emptyset$ since $\mathbb{I}\in Ann(\mathcal{I}).$ Therefore, $cl^\sharp(\mathbb{I})= \mathbb{I}\cup \mathbb{I}^\sharp=\mathbb{I}.$  In other word, $\mathbb{I}$ is not a dense set in $(\mathbb{R},\mathcal{U}^\sharp).$ Finally, it is obvious that  $(\mathbb{R},\mathcal{U}^\sharp)$ is  Hausdorff since  $\mathcal{U}\subseteq \mathcal{U}^\sharp$ and $(\mathbb{R},\mathcal{U})$ is Hausdorff. 
\end{example}

\begin{question}\label{quesi}
 Let $\mathbb{R}$ be the set of all real numbers. Is there any Hausdorff space such that the set of all rational numbers $\mathbb{Q} $ is clopen? 
\end{question}

\begin{example}
Let $\mathbb{R}$ be the real line with the usual topology $\mathcal{U}$ and let $\mathcal{I}=\mathcal{I}(\{0\})=\{\emptyset,\{0\}\}$. Let $\mathbb{Q}$ be the set of all rational numbers and $\mathbb{I}$ be the set of all irrational numbers. By simple calculations, it's not difficult to see that if $0\in A,$ then $A^\sharp=\{0\}.$ Thus, we have $cl^\sharp(\mathbb{Q})=\mathbb{Q}\cup \mathbb{Q}^\sharp=\mathbb{Q}\cup \{0\}=\mathbb{Q},$ that is, $\mathbb{Q}$ is closed in $(\mathbb{R},\mathcal{U}^\sharp).$ On the other hand, it is easy to see that $Ann(\mathcal{I})=2^{\mathbb{R}\setminus\{0\}},$ where $2^{\mathbb{R}\setminus\{0\}}$ is the powerset of $\mathbb{R}\setminus\{0\}.$ Now, let $A\subseteq \mathbb{R}.$ 

\textit{First case:} Let $0\in A.$
$$0\in A \Rightarrow (A^\sharp=\{0\})(A^* = cl(A) \vee A^* = cl(A)\setminus \{0\}).$$

\textit{Second case:} Let $0\notin A.$
$$0\notin A \Rightarrow (A^\sharp= \emptyset)(A^* = cl(A)).$$

  Then, $cl^\sharp (\mathbb{Q}) =\mathbb{Q}$ and $cl^\sharp(\mathbb{I}) = \mathbb{I}.$ Thus, $\mathbb{Q}$ is clopen in $(\mathbb{R},\mathcal{U}^\sharp).$
\end{example}
\begin{lemma} \label{disc}
Let $(X,\tau)$ be a topological space. If $F\in C(X,\tau)\setminus \{\emptyset,X\}$, then the space $(X,\tau^\sharp, \mathcal{I}(F))$ is disconnected.
\end{lemma}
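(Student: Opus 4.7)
The plan is to show that $F$ itself is a nonempty proper clopen subset of $(X,\tau^\sharp)$, where $\tau^\sharp=\tau^\sharp(\mathcal{I}(F),\tau)$. Since $F\in C(X,\tau)\setminus\{\emptyset,X\}$, this immediately yields the disconnectedness of $(X,\tau^\sharp)$.

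First I would verify that $F$ is $\tau^\sharp$-closed. Because $F\subseteq F$ we have $F\in\mathcal{I}(F)$, so by Corollary \ref{33}(a) one gets $F^\sharp=cl(F)$. As $F$ is already $\tau$-closed, $cl(F)=F$, hence $cl^\sharp(F)=F\cup F^\sharp=F$, which exactly says $F$ is closed in the $\sharp$-topology.

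Next I would verify that $F$ is $\tau^\sharp$-open by showing $X\setminus F$ is $\tau^\sharp$-closed. Since $(X\setminus F)\cap F=\emptyset$, we have $X\setminus F\in \mathcal{I}_\epsilon(F)=Ann(\mathcal{I}(F))$ (using the identity $Ann(\mathcal{I}(F))=\mathcal{I}_\epsilon(F)$ already proved in the paper). Then property $(f)$ of the sharp operator gives $(X\setminus F)^\sharp=\emptyset$, whence $cl^\sharp(X\setminus F)=X\setminus F$, i.e., $X\setminus F$ is $\tau^\sharp$-closed.

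Combining the two, $F$ is a clopen subset of $(X,\tau^\sharp)$ with $F\neq\emptyset$ and $F\neq X$, and so $(X,\tau^\sharp)$ is disconnected. There is no real obstacle here: the whole argument is a direct application of two already established properties of the sharp operator, namely $A\in\mathcal{I}\Rightarrow A^\sharp=cl(A)$ and $A\in Ann(\mathcal{I})\Rightarrow A^\sharp=\emptyset$, together with the observation that $F$ sits simultaneously in $\mathcal{I}(F)$ and that its complement sits in $Ann(\mathcal{I}(F))$.
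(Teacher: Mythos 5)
Your proof is correct and takes essentially the same approach as the paper's: the paper exhibits $X\setminus F$ as a nonempty proper clopen set of $\tau^\sharp$ (open since $X\setminus F\in\tau\subseteq\tau^\sharp$, closed since $X\setminus F\in Ann_F$ forces $(X\setminus F)^\sharp=\emptyset$), which is exactly your argument restated for the complement. The only cosmetic difference is that you obtain $\tau^\sharp$-closedness of $F$ from the computation $F^\sharp=cl(F)=F$ via Corollary \ref{33}(a) instead of from the inclusion $\tau\subseteq\tau^\sharp$.
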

\begin{proof}
Let $F\in C(X,\tau)\setminus \{\emptyset,X\}.$ We will prove that $(X,\tau^\sharp, \mathcal{I}(F))$ is disconnected. It is sufficient to show that there exists a set which is clopen in $(X,\tau^\sharp, \mathcal{I}(F)).$
\\
$\left.\begin{array}{rr}
F\in C(X,\tau)\setminus \{\emptyset,X\}\Rightarrow F^c\in \tau\setminus \{\emptyset,X\}
      \\
  \tau\subseteq \tau^{\sharp}   
\end{array}\right\}\Rightarrow F^c\in \tau^{\sharp}\setminus \{\emptyset,X\}\ldots (1)$
\\
$\left.\begin{array}{rr} F^c\in Ann_F\Rightarrow (F^c)^{\sharp}=\emptyset \\ cl^{\sharp}(F^c)=(F^c)^{\sharp}\cup F^c \end{array}\right\}\Rightarrow cl^{\sharp}(F^c)= F^c\Rightarrow F^c\in C(X,\tau^{\sharp},\mathcal{I}(F))\setminus \{\emptyset,X\}\ldots (2)$
\\

$(1),(2)\Rightarrow F^c\in \left(\tau^{\sharp}\setminus \{\emptyset,X\}\right)\cap \left(C(X,\tau^{\sharp},\mathcal{I}(F))\setminus \{\emptyset,X\}\right).$
\end{proof}

\begin{example}
    In this example, we will build a disconnected Hausdorff space. Let $\mathbb{R}$ be the real line with the usual topology $\mathcal{U}$. Let $\mathcal{I}  = \mathcal{I}(\mathbb{N}),$ where $\mathbb{N}$ is the set of all natural numbers. The set $\mathbb{N}$ is closed in $(\mathbb{R},\mathcal{U})$. By Lemma \ref{disc}, $(\mathbb{R},\mathcal{U}^{\sharp},\mathcal{I}(\mathbb{N}) ) $ is disconnected.
\end{example}
{\bf Acknowledgements} We would like to thank the anonymous reviewers for their careful reading of our manuscript and their insightful comments and suggestions.
\\

{\bf Author Contributions} The authors contributed equally to this work. The
authors read and approved the final manuscript.
\\

{\bf Funding} Not applicable.
\\

{\bf Data availability} Enquiries about data availability should be directed to
the authors.
\\

{\bf Declarations}
\\

{\bf Ethical approval} This article does not contain any studies with human
participants or animals performed by any of the authors.
\\

{\bf Conflict of interest} Authors do not have any conflict of interest with
any other person or organization.
\\

{\bf Informed consent} Informed consent was obtained from all individual
participants included in the study.

\bibliographystyle{amsplain}

\end{document}